\newtheorem{define}{Definition}[section]
\theoremstyle{remark}
\theoremstyle{plain}
\newtheorem{theo}[define]{Theorem}
\newtheorem{lemma}[define]{Lemma}
\newtheorem{cor}[define]{Corollary}
\author{Michael Lambert}
\title{Representation Embeddings of Cartesian Theories}
\begin{document}
\maketitle

\begin{abstract}
A representation embedding between cartesian theories can be defined to be a functor between the respective categories of models that preserves finitely-generated projective models and that preserves and reflects certain epimorphisms.  This recalls standard definitions in the representation theory of associative algebras.  The main result of this paper is that a representation embedding in the general sense preserves undecidability of theories; that is, if 
\[ E\colon \mathbb T_1\text -\mathbf{Mod}(\mathbf{Set})\to\mathbb T_2\text{-}\mathbf{Mod}(\mathbf{Set})
\]
is a representation embedding of cartesian theories, then if $\mathbb T_1$ is undecidable, so is $\mathbb T_2$.  This result is applied to obtain an affirmative resolution of a reformulation in cartesian logic of a conjecture of M. Prest that every wild algebra over an algebraically closed field has an undecidable theory of modules.
\end{abstract}

\tableofcontents

\section{Introduction} 

\subsection{The ``Wild Implies Undecidable'' Conjecture}

Within the cartesian fragment of first-order categorical logic, this paper presents a reformulation of the conjecture of M. Prest that any wild algebra has an undecidable theory of modules \cite{Prest}.  The background  for the reformulation is explained over the course of \S 2.  An affirmative resolution of the reformulated conjecture and an application to the original appear in \S4.  The extent to which this ought to be seen as a confirmation of the original conjecture is discussed in $\S$5.

Let $k$ denote an algebraically closed field.  A $k$-algebra is a ring with a compatible $k$-vector space structure.  A module over a $k$-algebra $A$ is a $k$-vector space $M$ with an action of $A$ that is compatible with the group and vector space structures on $M$.  Consider the free $k$-algebra on two symbols, $k\langle X,Y\rangle$.  The classical, first-order theory of $k\langle X,Y\rangle$-modules is known to admit no Turing Machine algorithm that will establish whether a given sentence of the theory is a theorem \cite{Prest},\cite{Baur}.  In this sense the theory is undecidable.  Now, a $k$-algebra $S$ is wild if its category of modules admits a representation embedding.  This is a finitely generated $(S,k\langle X,Y\rangle)$-bimodule $M$, free over $k\langle X,Y\rangle$, such that an induced functor $M\otimes_{k\langle X,Y\rangle} - \colon k\langle X,Y\rangle$-$\mathrm{mod}\to S$-$\mathrm{mod}$, between categories of finite-dimensional modules, preserves and reflects indecomposability and isomorphism \cite{Prest}, \cite{Benson}.  The conjecture is that any finite-dimensional wild algebra has an undecidable theory of modules. 

The conjecture is known to be true for certain classes of algebras (for example, for path algebras of quivers without relations and for controlled algebras as in \cite{GP} which lists further references).  In outline, an outright proof of the conjecture would consist in somehow using the semantic relationship given by $M\otimes_{k\langle X,Y\rangle}-$ to induce a relationship between the syntactical systems.  This could be a translation of theories.  Ordinarily, a translation of theories induces a relationship between categories of models.  Thus, a proof of the conjecture as just outlined asks for precisely the reverse.  The proof given here uses the syntactic categories of categorical logic to accomplish this.  Roughly, these categories are algebraizations of syntax that classify models in suitably structured categories.  However, to use this technology and its completeness theorems, the algebraic data of $k\langle X,Y\rangle$-modules will be studied in its formulation in the categorical semantics of cartesian logic, rather than in classical first-order model theory.  

This underlying idea requires no special properties of the module categories.  Thus, in \S 4 a representation embedding of cartesian theories is defined to be a functor of categories of models preserving finitely-generated projective models and preserving and reflecting certain epimorphisms.  This is justified by Theorem 4.3 which characterizes finitely-generated projectives in certain categories of cartesian set-valued functors.  The main result, Theorem 4.5, shows that any representation embedding roughly amounts to a conservative translation of theories and thus preserves undecidability.  It is from this general result that a proof of the reformulated ``wild implies undecidable'' conjecture is obtained as a corollary.  It is not claimed, however, that this result resolves the original conjecture.  It is only asserted that the categorical reformulation clarifies the problem and provides a partial answer.  

\subsection{Background Category Theory} 

The references for all category theory are \cite{MacLane}, \cite{Handbook1} and A1 of \cite{elephant}.  In the interest of making the paper more-or-less self-contained, here are summarized some of the main points and notations.

Throughout $\mathbf{Set}$ denotes the category of sets and set functions; $\mathbf{Cat}$ is the category of small categories and their functors; and $\mathbf{Grph}$ is the category of directed graphs.  These and all other categories are locally small.  In general categories will be denoted with script capitals.  The set of morphisms between objects $c$ and $d$ of a locally small category $\mathscr X$ will be denoted $\mathscr X(c,d)$, or perhaps $\mathrm{Hom}(c,d)$, if $\mathscr X$ is clear.

If $\mathscr C$ denotes a small category, then $[\mathscr C,\mathbf{Set}]$ is the category of functors, or copresheavs, $\mathscr C\to \mathbf{Set}$ and their natural transformations.  This functor category is complete, cocomplete, and every morphism has a canonical epi-mono image factorization.  Limits and colimits are computed pointwise in $\mathbf{Set}$.  A functor $F\colon \mathscr C\to \mathbf{Set}$ is representable if there is an object $c\in\mathscr C_0$ for which there is a natural isomorphism $F\cong \mathscr C(c,-)$ with the canonical representable functor $\mathbf y(c) = \mathscr C(c,-)$.

Yoneda's Lemma gives a parameterization of natural transformations from a representable functor to a $\mathbf{Set}$-valued functor. It states that for any $c\in \mathscr C_0$ and functor $K\colon\mathscr C\to\mathbf{Set}$, there is a bijection 
\[\mathrm{Nat}(\mathbf y(c),K) \cong Kc
\]
natural in $c$ and $K$.  Now, $\mathbf y$ is part of a functor.  For any $h\colon c\to d$ of $\mathscr C$, there is a natural transformation $\mathscr C(h,-)\colon\mathscr C(d, -)\to\mathscr C(c,-)$ taking $f\colon d\to c'$ to its pullback by $h$, namely, $hf\colon c\to c'$.  The traditional super-scripted `$*$' denotes these maps, i.e. $h^*(f) = hf$.  The Yoneda embedding $\mathbf y\colon \mathscr C^{op} \to [\mathscr C,\mathbf{Set}]$ is then given by $c\mapsto \mathscr C(c,-)$ on objects and $h\mapsto h^*$ on arrows.  It is fully faithful and injective on objects. 

The importance of representable functors is partly that together they form a dense class of projective generators for $[\mathscr C,\mathbf{Set}]$.  This means that representables are projective; and that every $K\colon \mathscr C\to\mathbf{Set}$ is canonically a colimit of representables.  The indexing category is the category of elements, denoted
\[ \int_{\mathscr C}K.
\]
It has as objects those $(c,x)$ with $x\in Kc$ and as arrows those $f\colon c\to d$ such that $Kf(x)=y$.  The Yoneda embedding composed with the projection from the category of elements yields a diagram
\[ D\colon (\int_{\mathscr C}K)^{op}\longrightarrow [\mathscr C,\mathbf{Set}].
\]
The colimit in $[\mathscr C,\mathbf{Set}]$ is $P$ itself.  For $P$ is the vertex of a universal cocone.  That is, for each $(c,x)$ of the category of elements, there is a unique natural transformation 
\[ \tilde x\colon \mathbf y(c)\to P
\] 
given by the Yoneda isomorphism.  The family of such $\tilde x$ yields a cocone with vertex $P$ by naturality.  Universality is established by further use of the Yoneda isomorphism and its naturality.  

A category is cartesian if it has all finite limits, equivalently, finite products and equalizers.  A functor is cartesian if it preserves finite limits.  When $\mathscr C$ is cartesian, $K$ is cartesian if, and only if, the colimit above is filtered, as in 6.1.2 of \cite{Handbook1}.  Thus, every cartesian $\mathbf{Set}$-valued functor on a cartesian category is a filtered colimit of representable functors.

When $\mathscr C$ is a small cartesian category, $\mathbf{Cart}(\mathscr C,\mathbf{Set})$ denotes the full subcategory of $[\mathscr C,\mathbf{Set}]$ of cartesian copresheaves $\mathscr C\to\mathbf{Set}$.  Limits and filtered colimits of $\mathbf{Cart}(\mathscr C,\mathbf{Set})$ are simply inherited from $[\mathscr C,\mathbf{Set}]$.  General colimits exist, but have a separate construction arising roughly from a left-adjoint to the canonical inclusion of $\mathbf{Cart}(\mathscr C,\mathbf{Set})$ as a subcategory.  In addition $\mathbf{Cart}(\mathscr C,\mathbf{Set})$ inherits epi-mono factorizations from the functor category.  In particular canonical image copresheaves of morphisms in $\mathbf{Cart}(\mathscr C,\mathbf{Set})$ are again cartesian.  Each representable copresheaf $\mathscr C\to\mathbf{Set}$ is cartesian.  Thus, the Yoneda embedding factors through $\mathbf{Cart}(\mathscr C,\mathbf{Set})$.

Roughly, cartesian functors $M\colon \mathscr C\to\mathbf{Set}$ are models or algebras for an equational theory determined by $\mathscr C$.  The important property of being finitely generated has a purely category-theoretic definition, stated here formally.  In general a filtered union is a colimit of a diagram $D\colon \mathscr J\to\mathscr X$ where $\mathscr J$ is filtered and the image under $D$ of any morphism of $\mathscr J$ is a monomorphism in $\mathscr X$.  Colimit preservation means that the canonical map
\[ \lim_{\to}[\mathscr X,\mathbf{Set}](a,d_j)\longrightarrow [\mathscr X,\mathbf{Set}](a,\lim_{\to}d_j)
\]
is an isomorphism of sets.  

\begin{define}  An object $a \in\mathscr X_0$ of a category $\mathscr X$ is finitely generated if the representable functor 
\[\mathscr X(a,-)\colon\mathscr X\to\mathbf{Set}
\]
preserves filtered unions.  
\end{define}

That the definition specializes to the usual one for concrete models of usual algebraic theories is established in 5.22 of \cite{AR}.  In general, representable functors are finitely generated in $[\mathscr C,\mathbf{Set}]$ and in $\mathbf{Cart}(\mathscr C,\mathbf{Set})$.  The full subcategory of finitely-generated objects of a category $\mathscr X$ will be denoted with a subscript $\mathscr X_{fg}$.  In particular $\mathbf{Cart}(\mathscr C_{\mathbb T},\mathbf{Set})_{fg}$ for a cartesian syntactic category $\mathscr C_{\mathbb T}$ will be equivalent to the category of finitely-generated models of a cartesian theory $\mathbb T$. 

\section{First-Order Categorical Logic}

\subsection{Algebraic and Cartesian Theories} 

The presentation of first-order categorical logic in chapter D1 of \cite{elephant} will be adopted.  One notational change is that lists of variables are denoted using boldface `$\mathbf x$' instead of an over-arrow.  Other unexplained notational changes should be clear from context.  For any theory there should be countably many variables of each type.  In addition, for each fixed type, variables come with a total ordering.  An expression such as `$\mathbf x.\phi$' denotes a formula-in-context.  The context `$\mathbf x$' can be taken to be minimal, in that only the free variables of the formula appear. 

A theory $\mathbb T$ is given with a signature $\Sigma$ and includes both its axioms and the consequences obtained under a specified notion of derivability.  A theory is Horn if these involve only Horn formulas, that is, those built from the atomic ones using only `$\top$' and `$\wedge$'.  A regular theory is a Horn theory allowing `$\exists$'.  For a regular theory $\mathbb T$, the class of cartesian formula relative to $\mathbb T$ is defined inductively.  Atomic formulas of $\mathbb T$ are cartesian, finite conjunctions of cartesian formulas are cartesian, and $\mathbf x.\exists y\phi$ is cartesian if $\mathbf x,y.\phi$ is cartesian and $\phi\wedge \phi[z/y]\vdash y=z$ is provable in $\mathbb T$.  A cartesian theory is any regular theory admitting a partial-ordering of its axioms such that any given axiom is cartesian relative to the sub-theory generated by those axioms preceding it in the order.  For example, the theory of categories is cartesian.

An algebraic theory is a Horn theory, having a single sort, with function symbols but no relation symbols, and all of whose axioms are of the form `$\top\vdash_{\mathbf{x}} \phi$' where $\phi$ is Horn.  The theories of groups, abelian groups, rings, modules over a fixed ring, and lattices are all algebraic.  An axiomatization of the algebraic theory of modules over a ring $R$ is as follows.  The signature has one sort $A$, a binary function symbol $+\colon A\times A\to A$, a unit symbol $0\colon 1\to A$, an inverse unary function symbol $-(-)\colon A\to A$, and a family of unary function symbols $r\colon A\to A$, indexed by $r\in R$, all satisfying the group axioms,
\begin{align}
\top &\vdash_{x,y,z} ((x+y) + z) = (x+(y+z)) \notag\\ 
\top &\vdash_{x,y} (x+y) = (y+x) \notag\\
\top &\vdash_x (x+0)= x\notag \\
\top &\vdash_x (x+(-x)) = 0\notag
\end{align}
and the action axioms
\begin{align}
\top &\vdash_{x} rs(x) = r(s(x)) \notag\\ 
\top &\vdash_x 1(x)=x \notag\\
\top &\vdash_x (r+s)x = r(x)+s(x)\notag \\
\top &\vdash_{x,y} r(x+y) = r(x)+r(y) \notag.
\end{align}
If further $R$ should be an algebra and its modules vector spaces, then one would add to the signature more unary function symbols $A\to A$ indexed by $k$ with appropriate axioms giving a compatible action.  Notice that this algebraic theory is not Boolean, in that `$\top\vdash \phi\vee\neg\phi$' is not formulable in cartesian logic. 

In order to use the technology of syntactic categories, the cartesian theory of $R$-modules will be studied instead of the algebraic theory.  Denote this theory by $\mathbb T_R$.  It has the same signature and axioms as the algebraic theory, but includes all the consequences obtained from cartesian derivations.  The cartesian theory will be thought of as ``algebraic'' in nature, since its axioms are those of the algebraic theory.  Accordingly, the theory is an example of what is called an ``essentially algebraic'' theory.

For any theory $\mathbb T$, the precise definition of ``structure'' and ``model'' in an apropriate category $\mathscr D$ is given in D1.2 of \cite{elephant}.  Roughly, a model $\mathbf M\colon \mathbb T\to\mathscr D$ is a structure that validates the axioms of the theory.  The category of models and their morphisms in a suitable category $\mathscr D$ is denoted $\mathbb T$-$\mathbf{Mod}(\mathscr D)$.  For any ring $R$, the category of $\mathbf{Set}$-models of the algebraic theory of $R$-modules is isomorphic to $R$-$\mathbf{Mod}$.

Syntactic categories are a major part of subsequent proofs.  In view of the importance of existential quantification in their construction, the following lemma is proved here, establishing an expected useful property.  The proof mimics that in the logic of \cite{Bell}, but uses the system of the main reference.

\begin{lemma}  There is the following derived rule for existential introduction, namely, 
\[ \dfrac{\phi\vdash_{\mathbf{x}} \psi(\tau/y)}{\phi\vdash_{\mathbf{x}} \exists y\psi}
\]
where $\tau$ is free for $y$ in $\psi$ and $\mathbf{x}$ is a suitable context for $\phi$ and $\psi(\tau/y)$.
\begin{proof}  The derivation (with omitted context notation)
\[\dfrac{\dfrac{}{\phi\vdash \psi(\tau/y)}\qquad \dfrac{\dfrac{ \dfrac{}{\exists y\psi\vdash  \exists y\psi}}{\psi\vdash \exists y\psi}}{\psi(\tau/y)\vdash \exists y\psi}}{\phi\vdash \exists y\psi}
\]
uses from top to bottom, $\exists$-elimination, substitution, and the cut rule, all detailed in D1.3.1 of \cite{elephant}.  \end{proof}  
\end{lemma}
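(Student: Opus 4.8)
The plan is to obtain the derived rule by unpacking the primitive bidirectional rule for existential quantification exactly once and then composing it with substitution and the cut rule, all of which are listed in D1.3.1 of \cite{elephant}. The governing observation is that $\exists y\,\psi$ may serve as a witness for itself: the formula $\exists y\,\psi$ contains no free occurrence of $y$, so it is a legitimate right-hand side to which the $\exists$-rule can be applied in the direction that reintroduces a bound variable into the context. This reduces the whole problem to a short, purely syntactic assembly and requires no semantic input.

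Concretely, I would begin from the identity sequent $\exists y\,\psi \vdash_{\mathbf{x}} \exists y\,\psi$. Reading the primitive $\exists$-rule in the direction that restores $y$ to the context (the $\exists$-elimination direction in the notation of \cite{elephant}), which is licensed precisely because $y$ does not occur free in the right-hand side $\exists y\,\psi$, yields $\psi \vdash_{\mathbf{x},y} \exists y\,\psi$. Next I would apply the substitution rule to replace $y$ by the term $\tau$: since $\tau$ is free for $y$ in $\psi$ no variable capture occurs, and because $y$ is bound on the right the formula $\exists y\,\psi$ is left unchanged, so the result is $\psi(\tau/y) \vdash_{\mathbf{x}} \exists y\,\psi$. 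Finally, cutting this against the hypothesis $\phi \vdash_{\mathbf{x}} \psi(\tau/y)$ produces $\phi \vdash_{\mathbf{x}} \exists y\,\psi$, which is the desired conclusion. The derivation tree therefore assembles from identity, the $\exists$-rule, substitution, and cut, read from top to bottom.

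The routine content is thus minimal, and the only points demanding care are bookkeeping ones. I would verify that the side condition on the primitive $\exists$-rule is genuinely met (no free $y$ on the right), that the substitution of $\tau$ does not collide with the variables of $\mathbf{x}$, and that the contexts agree at the cut step so that the rule applies with the common context $\mathbf{x}$. These are exactly the hypotheses that $\tau$ is free for $y$ and that $\mathbf{x}$ is a suitable context for both $\phi$ and $\psi(\tau/y)$. The main (and essentially only) obstacle is keeping the contexts straight across the upward reading of the $\exists$-rule and the ensuing substitution, since the system of \cite{elephant} tracks contexts explicitly where a more informal presentation such as that of \cite{Bell} would suppress them; once this variable discipline is confirmed, the result follows immediately.
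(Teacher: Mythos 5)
Your derivation is exactly the one in the paper: identity on $\exists y\,\psi$, the $\exists$-rule read in the elimination direction to obtain $\psi\vdash\exists y\,\psi$, substitution of $\tau$ for $y$, and a final cut against the hypothesis. The approach and the bookkeeping points you flag (the side condition on $y$, freeness of $\tau$, matching contexts at the cut) all match the paper's proof, so this is correct.
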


\subsection{Syntactic Categories}

The syntactic category of a cartesian theory $\mathbb T$, described in D1.4 of \cite{elephant}, is denoted $\mathscr C_{\mathbb T}$.  It has as objects $\alpha$-equivalence classes of formulas-in-context $\lbrace \mathbf{x}.\phi\rbrace$ where $\mathbf x.\phi$ is cartesian relative to $\mathbb T$; and as arrows those $\mathbb T$-provable equivalence classes $[\theta]\colon\lbrace \mathbf{x}.\phi\rbrace\to \lbrace \mathbf{y}.\psi\rbrace$ of formulas $\theta$, cartesian relative to $\mathbb T$, for which the $\mathbb T$-provably functional sequents 
\begin{align} \theta &\vdash_{\mathbf{x},\mathbf{y}}\phi\wedge \psi\notag \\
\theta\wedge\theta[\mathbf{z}/\mathbf{y}]&\vdash_{\mathbf{x},\mathbf{y},\mathbf{z}} \mathbf{y} = \mathbf{z} \notag\\
\phi &\vdash_{\mathbf{x}}\exists\mathbf{y}\theta\notag
\end{align}
are derivable.  Composition of $[\theta]\colon\lbrace \mathbf{x}.\phi\rbrace\to \lbrace \mathbf{y}.\psi\rbrace$ and $[\gamma]\colon\lbrace \mathbf{y}.\psi\rbrace\to \lbrace \mathbf{z}.\chi\rbrace$ is defined as $[\gamma][\theta] := [\exists y(\theta\wedge \gamma)]$, and identity $\lbrace\mathbf x.\phi\rbrace\to\lbrace\mathbf x'.\phi[\mathbf x'/\mathbf x]\rbrace$ is defined as $[\phi\wedge\mathbf x=\mathbf x']$.  It should be noted that, strictly speaking, no algebraic theory has a syntactic category.  One is obtained by taking the cartesian theory with the same signature and axioms.  This is what was meant in the discussion of the cartesian theory $\mathbb T_R$ in $\S$2.1. 

Recall from D1.4.2 that $\mathscr C_{\mathbb T}$ is cartesian.  The proof there gives explicit constructions of products and equalizers.  Let $\mathrm{Sub}(\lbrace \mathbf{x}.\phi\rbrace)$ denote the poset of subobjects of the object $\lbrace \mathbf{x}.\phi\rbrace$ of $\mathscr C_{\mathbb T}$.  This is a meet semi-lattice.  The following result shows that $\mathscr C_{\mathbb T}$ classifies models in any cartesian category.

\begin{lemma}  For any cartesian theory $\mathbb T$ and any cartesian category $\mathscr D$, there is an equivalence 
\[\mathbf{Cart}(\mathscr C_{\mathbb T},\mathscr D)\simeq \mathbb T\text{-}\mathbf{Mod}(\mathscr D).
\]
\begin{proof} See \cite{elephant} D1.4.7 for the functors and a proof.\end{proof}
\end{lemma}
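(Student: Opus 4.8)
The plan is to exhibit the equivalence by constructing a \emph{generic model} of $\mathbb T$ inside $\mathscr C_{\mathbb T}$ and showing that evaluation at it and interpretation of formulas are mutually inverse constructions. First I would assemble the tautological structure $M_{\mathbb T}$ in $\mathscr C_{\mathbb T}$: each sort $A$ of the signature is interpreted by the object $\lbrace x^A.\top\rbrace$, each function symbol by the arrow it names (via the provably-functional sequents that define arrows of $\mathscr C_{\mathbb T}$), and each relation symbol by the subobject it cuts out in the appropriate $\mathrm{Sub}(\lbrace\mathbf x.\top\rbrace)$. Because provability in $\mathbb T$ is built directly into the arrows and subobjects of $\mathscr C_{\mathbb T}$, one checks that $M_{\mathbb T}$ validates every axiom, so it is a model; and by construction the subobject interpreting a cartesian formula $\mathbf x.\phi$ in $M_{\mathbb T}$ is exactly the canonical inclusion $\lbrace\mathbf x.\phi\rbrace\hookrightarrow \lbrace\mathbf x.\top\rbrace$.

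Next, for a cartesian functor $F\colon \mathscr C_{\mathbb T}\to\mathscr D$ I would take its associated model to be the image structure $F(M_{\mathbb T})$. The crux is to show that $F$ commutes with interpretation: the subobject of $\mathscr D$ interpreting a cartesian formula in $F(M_{\mathbb T})$ is the $F$-image of the subobject interpreting it in $M_{\mathbb T}$. This is proved by induction over the inductive definition of cartesian-relative-to-$\mathbb T$ formulas given in \S2.1. The atomic and conjunction cases are immediate, since they are computed with pullbacks and products, which $F$ preserves. The delicate case is the existential $\mathbf x.\exists y\,\psi$: here the accompanying provable-uniqueness sequent $\psi\wedge\psi[z/y]\vdash y=z$ forces the interpreting subobject to arise not from an image factorization but from a finite-limit construction, the relevant projection already being monic, so it too is preserved by the finite-limit-preserving $F$. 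Granting this, the axioms, which hold in $M_{\mathbb T}$, are transported to $F(M_{\mathbb T})$, and the assignment $F\mapsto F(M_{\mathbb T})$ is functorial, a natural transformation $F\Rightarrow G$ furnishing a homomorphism through its components at the objects interpreting the sorts.

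Conversely, given a model $M$ in $\mathscr D$ I would build a cartesian functor $F_M$ sending $\lbrace\mathbf x.\phi\rbrace$ to the subobject of the product of sorts that interprets $\mathbf x.\phi$ in $M$, and sending an arrow $[\theta]$ to the morphism of subobjects determined by its three defining sequents. Functoriality requires matching the composite $[\exists y(\theta\wedge\gamma)]$ in $\mathscr C_{\mathbb T}$ with actual composition in $\mathscr D$, which again uses that the existential here is provably unique, so that its interpretation is a genuine composite of morphisms rather than a relational composite; and $F_M$ preserves finite limits because products of objects correspond to conjoined contexts and equalizers to adjoined equations, each computed by the same finite-limit recipe in $M$.

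Finally I would check that the two passages are pseudo-inverse. Starting from $F$, applying the interpretation recipe to $F(M_{\mathbb T})$ recovers $F$ up to natural isomorphism, because $F$ already commutes with interpretation and $M_{\mathbb T}$ is generic; starting from $M$, evaluating $F_M$ at $M_{\mathbb T}$ returns $M$ up to canonical isomorphism. Extending both assignments to natural transformations and homomorphisms identifies the arrows of $\mathbf{Cart}(\mathscr C_{\mathbb T},\mathscr D)$ with model homomorphisms, yielding the equivalence. I expect the main obstacle to be precisely the existential step of the second paragraph: verifying that provable uniqueness in a cartesian theory replaces the image factorization needed in general regular logic by a plain finite limit, so that merely finite-limit-preserving functors suffice. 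This is the feature distinguishing cartesian from regular classifying categories, and it is where the hypotheses must be used with care.
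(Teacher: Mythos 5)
Your proposal is correct and is essentially the standard argument that the paper simply delegates to D1.4.7 of \cite{elephant}: construct the generic model $M_{\mathbb T}$, pass from a cartesian functor $F$ to the model $F(M_{\mathbb T})$ and from a model $M$ to the interpretation functor $F_M$, and check these are pseudo-inverse. You have also correctly isolated the one point where the cartesian hypothesis earns its keep, namely that provable uniqueness makes the interpretation of $\exists y\,\psi$ a monomorphism outright, so no image factorization is needed and finite-limit preservation suffices.
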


The proof of the lemma shows that every $\mathscr D$-model $\mathbf M\colon\mathbb T\to \mathscr D$ is the image of a certain ``universal model'' $\mathbf M_\mathbb{T}$ under a cartesian functor $F\colon  \mathscr C_{\mathbb T}\to \mathscr D$.  The model $\mathbf M_\mathbb{T}$ arises from a canonical interpretation of $\mathbb T$ in $\mathscr C_\mathbb{T}$.  Each sort $A$ is interpreted as the object $\lbrace x.\top\rbrace$ with $x$ of type $A$, and each function symbol $f\colon A_1,\dots, A_n\to B$ is interpreted as the morphism
\[[f(x_1,\dots, x_n)=y]\colon \lbrace x_1,\dots, x_n.\top\rbrace \to\lbrace y.\top\rbrace.
\]
The axiomatization of the modules of some fixed ring $R$ includes distinguished unary function symbols indexed by the elements $r\in R$.  At least in this unary case, composition of morphisms under canonical interpretation as in the display behaves as one expects.

\begin{lemma}  Let $f$ and $g$ denote two unary function symbols of a cartesian theory $\mathbb T$ with signatures $f\colon A\to B$ and $g\colon B\to C$.  The composition of the canonical interpretations is then
\[ [g(y)=z][f(x) =y] = [g(f(x)) = z]\colon \lbrace x.\top\rbrace\to\lbrace z.\top\rbrace.
\]
\end{lemma}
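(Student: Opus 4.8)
The plan is to unfold the definition of composition in $\mathscr{C}_{\mathbb{T}}$ and then reduce the asserted equation of arrows to a single provable equivalence of formulas. Applying the composition rule recorded above with $[\theta]=[f(x)=y]$ and $[\gamma]=[g(y)=z]$, the composite $[g(y)=z][f(x)=y]$ is by definition the arrow represented by $\exists y\,(f(x)=y\wedge g(y)=z)$. Since two parallel arrows of $\mathscr{C}_{\mathbb{T}}$ coincide precisely when their representing formulas are $\mathbb{T}$-provably equivalent, everything reduces to establishing
\[ \exists y\,(f(x)=y\wedge g(y)=z)\;\dashv\vdash_{x,z}\;g(f(x))=z. \]
A useful byproduct is that this equivalence shows, a posteriori, that $g(f(x))=z$ does represent a legitimate arrow $\lbrace x.\top\rbrace\to\lbrace z.\top\rbrace$.

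For the right-to-left sequent I would use the derived rule for existential introduction proved above, with witnessing term $\tau:=f(x)$. Setting $\psi:=(f(x)=y\wedge g(y)=z)$, the substituted formula $\psi(f(x)/y)$ is $f(x)=f(x)\wedge g(f(x))=z$; from the reflexivity axiom $\top\vdash f(x)=f(x)$ and $\wedge$-introduction one gets $g(f(x))=z\vdash_{x,z}\psi(f(x)/y)$, whence the derived rule yields $g(f(x))=z\vdash_{x,z}\exists y\,\psi$.

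For the left-to-right sequent I would first record the variable-form Leibniz rule
\[ (y=w)\wedge(g(y)=z)\vdash_{y,w,z}g(w)=z, \]
an instance of the equality rules of D1.3, and then apply the substitution rule, replacing the variable $w$ by the compound term $f(x)$, to obtain $(y=f(x))\wedge(g(y)=z)\vdash_{x,y,z}g(f(x))=z$. Using symmetry of equality (itself derivable from reflexivity and the Leibniz rule) this becomes $(f(x)=y)\wedge(g(y)=z)\vdash_{x,y,z}g(f(x))=z$. Since $y$ occurs neither in the context $x,z$ nor in the conclusion $g(f(x))=z$, an application of $\exists$-elimination discharges $y$ and gives $\exists y\,(f(x)=y\wedge g(y)=z)\vdash_{x,z}g(f(x))=z$, completing the equivalence.

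The reflexivity, symmetry and conjunction steps are routine; the point demanding care is the left-to-right direction, where one must interleave the variable-form equality rule with the substitution rule in the correct order so as to legitimately substitute the compound term $f(x)$ for a variable, and must check the freshness side-condition on $y$ before invoking $\exists$-elimination. Everything else is bookkeeping with contexts.
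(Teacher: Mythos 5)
Your proposal is correct and follows essentially the same route as the paper's proof: unfold the definition of composition to $[\exists y(f(x)=y\wedge g(y)=z)]$ and reduce the equality of arrows to the $\mathbb{T}$-provable equivalence of that formula with $g(f(x))=z$, using the derived rule of Lemma 2.1 with witness $f(x)$ for the substantive direction. The paper simply cites Lemma 2.1 for the whole equivalence, so your explicit treatment of the left-to-right direction (Leibniz rule, substitution of $f(x)$, and $\exists$-elimination with the freshness check on $y$) supplies detail the paper leaves implicit, but it is not a different argument.
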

\begin{proof}  From the definition of composition, one has that the left hand side of the equality in the display is the class of $\exists y(g(y)=z\wedge f(x) = y)$.  But this is provably equivalent to $g(f(x)) = z$ by Lemma 2.1 above.  Hence the two formulas represent the same class and the equality holds.\end{proof}

The presence of the universal model $\mathbf M_{\mathbb T}$ leads to a completeness theorem for cartesian logic, following from the remarkable result that provability in $\mathbb T$ is equivalent to satisfaction in $M_\mathbb{T}$.

\begin{lemma}  For a cartesian theory $\mathbb T$, a cartesian sequent $\phi\vdash_{\mathbf{x}}\psi$ is provable in $\mathbb T$ if, and only if, it is satisfied in $M_\mathbb T$.  In addition, $\phi\vdash_{\mathbf{x}}\psi$ is provable in $\mathbb T$ if, and only if, $\lbrace \mathbf{x}.\phi\rbrace\leq \lbrace \mathbf{x}.\psi\rbrace$ holds as subobjects of $\lbrace\mathbf{x}.\top\rbrace$, that is, if, and only if, there is a monic arrow $\lbrace \mathbf{x}.\phi\rbrace\to \lbrace \mathbf{x}.\psi\rbrace$ of $\mathscr C_{\mathbb T}$.
\end{lemma}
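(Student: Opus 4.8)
The plan is to prove both biconditionals by routing them through the subobject characterization, which I treat as the central claim. First I would recall, from the construction of the universal model $\mathbf M_{\mathbb T}$ sketched above (detailed around D1.4.4 of \cite{elephant}), that by induction on the structure of formulas the interpretation of a formula-in-context $\mathbf x.\phi$ in $\mathbf M_{\mathbb T}$ is exactly the subobject $\lbrace\mathbf x.\phi\rbrace\hookrightarrow\lbrace\mathbf x.\top\rbrace$: the base cases are the given interpretations of sorts and function symbols, and the inductive steps use the explicit constructions of finite limits and of image factorizations in $\mathscr C_{\mathbb T}$ recorded in D1.4.2. Granting this, satisfaction of $\phi\vdash_{\mathbf x}\psi$ in $\mathbf M_{\mathbb T}$ means precisely that $[\![\mathbf x.\phi]\!]_{\mathbf M_{\mathbb T}}\leq[\![\mathbf x.\psi]\!]_{\mathbf M_{\mathbb T}}$, i.e. $\lbrace\mathbf x.\phi\rbrace\leq\lbrace\mathbf x.\psi\rbrace$ in $\mathrm{Sub}(\lbrace\mathbf x.\top\rbrace)$. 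Thus the first biconditional follows from the second, and it suffices to show that $\phi\vdash_{\mathbf x}\psi$ is provable if and only if there is a morphism $\lbrace\mathbf x.\phi\rbrace\to\lbrace\mathbf x.\psi\rbrace$ over $\lbrace\mathbf x.\top\rbrace$; any such morphism is automatically monic, being a map between subobjects of a common object.

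For the soundness direction, assume $\phi\vdash_{\mathbf x}\psi$ is provable. I would exhibit the required map as the class $[\,\phi\wedge\mathbf x=\mathbf x'\,]\colon\lbrace\mathbf x.\phi\rbrace\to\lbrace\mathbf x'.\psi\rbrace$ and verify the three provably-functional conditions. The codomain condition $\phi\wedge\mathbf x=\mathbf x'\vdash\psi[\mathbf x'/\mathbf x]$ follows from the hypothesis together with substitution of equals; functionality is immediate from transitivity of equality; and totality $\phi\vdash_{\mathbf x}\exists\mathbf x'(\phi\wedge\mathbf x=\mathbf x')$ follows by taking $\mathbf x'=\mathbf x$ and applying the derived existential-introduction rule of Lemma 2.1. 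Finally, composing with the canonical inclusion $[\,\psi\wedge\mathbf x'=\mathbf x''\,]\colon\lbrace\mathbf x'.\psi\rbrace\to\lbrace\mathbf x''.\top\rbrace$ and simplifying the resulting existential by the one-point rule recovers $[\,\phi\wedge\mathbf x=\mathbf x''\,]$, the canonical inclusion of $\lbrace\mathbf x.\phi\rbrace$; hence the triangle over $\lbrace\mathbf x.\top\rbrace$ commutes and the subobject inequality holds.

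The completeness direction is where the real work lies. Suppose $[\theta]\colon\lbrace\mathbf x.\phi\rbrace\to\lbrace\mathbf x'.\psi\rbrace$ is a morphism whose composite with the inclusion $[\,\psi\wedge\mathbf x'=\mathbf y\,]\colon\lbrace\mathbf x'.\psi\rbrace\to\lbrace\mathbf y.\top\rbrace$ equals the canonical inclusion $[\,\phi\wedge\mathbf x=\mathbf y\,]\colon\lbrace\mathbf x.\phi\rbrace\to\lbrace\mathbf y.\top\rbrace$. Unwinding the definition of composition gives
\[ [\,\exists\mathbf x'(\theta\wedge\psi\wedge\mathbf x'=\mathbf y)\,]=[\,\phi\wedge\mathbf x=\mathbf y\,], \]
and since the codomain condition $\theta\vdash_{\mathbf x,\mathbf x'}\psi$ for $\theta$ already holds, the conjunct $\psi$ is redundant and the left side simplifies by the one-point rule to $[\,\theta(\mathbf x,\mathbf y)\,]$. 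Equality of these classes means $\theta(\mathbf x,\mathbf y)$ and $\phi(\mathbf x)\wedge\mathbf x=\mathbf y$ are $\mathbb T$-provably equivalent in context $\mathbf x,\mathbf y$. Renaming $\mathbf x'$ to $\mathbf y$, the codomain condition reads $\theta(\mathbf x,\mathbf y)\vdash\psi(\mathbf y)$; combined with the equivalence just obtained this yields $\phi(\mathbf x)\wedge\mathbf x=\mathbf y\vdash\psi(\mathbf y)$, whence $\phi\vdash_{\mathbf x}\psi$ on using the equality $\mathbf x=\mathbf y$ to substitute $\mathbf y:=\mathbf x$.

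I expect the main obstacle to be this last direction, specifically the careful manipulation of the existential quantifiers in the composition formula $[\gamma][\theta]=[\exists\mathbf y(\theta\wedge\gamma)]$ together with the one-point elimination rule, and keeping straight which direction of entailment the commuting triangle actually supplies. By contrast, the soundness direction and the identification of satisfaction in $\mathbf M_{\mathbb T}$ with the subobject ordering are comparatively routine, resting only on the construction of $\mathbf M_{\mathbb T}$ and on Lemma 2.1.
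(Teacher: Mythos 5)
Your proposal is correct and is essentially the argument the paper defers to: the paper's proof is simply a citation of D1.4.4 and D1.4.5 of \cite{elephant}, and your reduction of satisfaction in $\mathbf M_{\mathbb T}$ to the subobject ordering (via the inductive identification $[\![\mathbf x.\phi]\!]_{\mathbf M_{\mathbb T}}=\lbrace\mathbf x.\phi\rbrace\rightarrowtail\lbrace\mathbf x.\top\rbrace$), followed by the soundness and completeness directions for the subobject characterization, is exactly the content of those results. One small point in your favour: you correctly prove the version where the monomorphism commutes over $\lbrace\mathbf x.\top\rbrace$, which is the substantive reading of the lemma's final ``that is'' clause.
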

\begin{proof}  See D1.4.4 and D1.4.5 of \cite{elephant}.\end{proof} 

\subsection{Translations Between Cartesian Theories}

Throughout this subsection $\mathbb T_1$ and $\mathbb T_2$ denote cartesian theories.  Motivated by \cite{Bell}, a translation $t\colon \mathbb T_1\to\mathbb T_2$ is a function of signatures $t\colon\Sigma_1\to\Sigma_2$ sending types $A$ to types $t(A)$ and function symbols $f\colon A\to B$ to function symbols $t(f)\colon t(A)\to t(B)$, and similarly for relation symbols, that preserves provability in a sense to be spelled out presently.  The assignment between signatures is inductively extended to all terms and formulas of $\mathbb T_1$ in the following manner.  For a variable $x$ of type $A$, say, $t(x)$ is the variable of type $t(A)$ having the same place in the ordering of the variables of type $t(A)$.  The subsequent definitions involved in this extension are
\begin{align} t(\top) &:=\top \notag\\
t(f(\tau)) &:=t(f)(t(\tau)) \notag \\
t(\tau =\sigma)&:=t(\tau) = t(\sigma) \notag\\
t(\phi\wedge\psi) &:=t(\phi)\wedge t(\psi) \notag \\
t(\exists x\phi)&:=\exists y t(\phi) \notag
\end{align}
where $\sigma$ and $\tau$ are terms, $\phi$ and $\psi$ are formulas, and $y$ is a variable free in $t(\phi)$ and of the type corresponding to that of $x$ under $t$.  Thus, the function $t\colon \Sigma_1\to\Sigma_2$ as above, suitably extended to all terms and formulas of $\mathbb T_1$, is a translation of these cartesian theories if whenever $\phi\vdash\psi$ is provable in $\mathbb T_1$, then $t(\phi)\vdash t(\psi)$ is provable in $\mathbb T_2$.  Use the notation $t\colon\mathbb T_1\to\mathbb T_2$ for such a translation.  A translation is conservative if it reflects provability in the sense that if $t(\phi)\vdash t(\psi)$ is provable in $\mathbb T_1$, then $\phi\vdash\psi$ is provable in $\mathbb T_2$.  

\begin{lemma}  A translation $t\colon\mathbb T_1\to\mathbb T_2$ of cartesian theories induces a model $\mathbf T\colon \mathbb T_1\to\mathscr C_{\mathbb T_2}$, hence a cartesian functor $T\colon\mathscr C_{\mathbb T_1}\to\mathscr C_{\mathbb T_2}$.  If $t$ is conservative, then so is $T$.  
\begin{proof}  A translation $t\colon \mathbb T_1\to\mathbb T_2$ amounts to a structure $\mathbf T\colon \mathbb T_1\to \mathscr C_{\mathbb T_2}$.  In fact $\mathbf T$ is $t$ followed by the universal model $\mathbf M_{\mathbb T_2}$.  So, that $\mathbf T$ is a model follows from the fact that $t$ preserves provability.  Models and cartesian functors correspond as in the equivalence of Lemma 2.2. The definition of the functors in the equivalence in fact show that cartesian functors between the syntactic categories are in one-to-one correspondence with $\mathbb T_1$-models in $\mathscr C_{\mathbb T_2}$.  \end{proof}
\end{lemma}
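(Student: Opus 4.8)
The plan is to construct the model $\mathbf{T}$ directly from the translation, extract the functor $T$ via the classification equivalence of Lemma 2.2, and then obtain conservativity from the subobject characterization of provability in Lemma 2.5. First I would define the $\mathbb{T}_1$-structure $\mathbf{T}\colon\mathbb{T}_1\to\mathscr{C}_{\mathbb{T}_2}$ as the signature translation $t$ followed by the universal model $\mathbf{M}_{\mathbb{T}_2}$ of $\mathbb{T}_2$ in its own syntactic category. Concretely, a sort $A$ of $\mathbb{T}_1$ is sent to $\lbrace x.\top\rbrace$ with $x$ of type $t(A)$, a function symbol $f\colon A\to B$ to the canonical interpretation of $t(f)$, and similarly for relation symbols; the inductive clauses defining $t$ on terms and formulas then force $\mathbf{T}$ to interpret every formula $\phi$ of $\mathbb{T}_1$ as the interpretation of $t(\phi)$ in $\mathbf{M}_{\mathbb{T}_2}$. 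To upgrade this structure to a model, I would verify that $\mathbf{T}$ validates each axiom $\phi\vdash_{\mathbf{x}}\psi$ of $\mathbb{T}_1$: since $t$ is a translation, $t(\phi)\vdash_{t\mathbf{x}}t(\psi)$ is provable in $\mathbb{T}_2$, whence by the soundness direction of Lemma 2.5 it is satisfied in $\mathbf{M}_{\mathbb{T}_2}$, so the original sequent is satisfied in $\mathbf{T}$. This places $\mathbf{T}$ in $\mathbb{T}_1\text{-}\mathbf{Mod}(\mathscr{C}_{\mathbb{T}_2})$.

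Second, I would invoke Lemma 2.2 with $\mathscr{D}=\mathscr{C}_{\mathbb{T}_2}$, giving the equivalence $\mathbf{Cart}(\mathscr{C}_{\mathbb{T}_1},\mathscr{C}_{\mathbb{T}_2})\simeq\mathbb{T}_1\text{-}\mathbf{Mod}(\mathscr{C}_{\mathbb{T}_2})$, and let $T\colon\mathscr{C}_{\mathbb{T}_1}\to\mathscr{C}_{\mathbb{T}_2}$ be the cartesian functor corresponding to $\mathbf{T}$. The key fact I would read off from the construction of this equivalence, as recorded after Lemma 2.2, is that $T$ is characterized by $T\circ\mathbf{M}_{\mathbb{T}_1}=\mathbf{T}$. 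Unwinding this on objects shows that $T$ carries $\lbrace\mathbf{x}.\phi\rbrace$ to the interpretation of $\phi$ in $\mathbf{T}$, which by the first paragraph is the interpretation of $t(\phi)$ in $\mathbf{M}_{\mathbb{T}_2}$, namely $\lbrace t\mathbf{x}.t(\phi)\rbrace$; on arrows it sends $[\theta]$ to $[t(\theta)]$. Thus $T$ is, concretely, the functor that ``applies $t$'' to formulas-in-context.

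Finally, for conservativity I would show that $T$ reflects the subobject order, which is the form of conservativity relevant here (reflection of provability) by Lemma 2.5. Fix subobjects $\lbrace\mathbf{x}.\phi\rbrace,\lbrace\mathbf{x}.\psi\rbrace$ of $\lbrace\mathbf{x}.\top\rbrace$ in $\mathscr{C}_{\mathbb{T}_1}$; since every subobject of an object $\lbrace\mathbf{x}.\psi\rbrace$ is represented by some $\lbrace\mathbf{x}.\phi\rbrace$ with $\phi\vdash_{\mathbf{x}}\psi$, no generality is lost. Because $T$ is cartesian it preserves monomorphisms, so it sends these subobject inclusions into $\lbrace t\mathbf{x}.\top\rbrace$. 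Lemma 2.5 then gives $\lbrace\mathbf{x}.\phi\rbrace\leq\lbrace\mathbf{x}.\psi\rbrace$ iff $\phi\vdash_{\mathbf{x}}\psi$ is provable in $\mathbb{T}_1$, and the same lemma for $\mathbb{T}_2$ gives $T\lbrace\mathbf{x}.\phi\rbrace\leq T\lbrace\mathbf{x}.\psi\rbrace$ iff $t(\phi)\vdash_{t\mathbf{x}}t(\psi)$ is provable in $\mathbb{T}_2$. If $t$ is conservative, the latter provability yields $\phi\vdash_{\mathbf{x}}\psi$ in $\mathbb{T}_1$, so $T$ reflects $\leq$ on subobjects, i.e. $T$ reflects provability.

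I expect the main obstacle to be the second step: making rigorous that $T$ acts as ``apply $t$'' on objects and arrows. This is not a new derivation but a careful unwinding of the functors witnessing the equivalence of Lemma 2.2—specifically that the cartesian functor assigned to a model interprets each formula-in-context as the corresponding subobject, so that $T\circ\mathbf{M}_{\mathbb{T}_1}=\mathbf{M}_{\mathbb{T}_2}\circ t$ really does identify $T\lbrace\mathbf{x}.\phi\rbrace$ with $\lbrace t\mathbf{x}.t(\phi)\rbrace$. Once this identification is secured, both the model property and conservativity reduce formally to the facts that $t$ preserves and, respectively, reflects provability, combined with the provability-to-subobject dictionary of Lemma 2.5.
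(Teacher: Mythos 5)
Your proposal is correct and follows essentially the same route as the paper: define $\mathbf T$ as $t$ followed by the universal model $\mathbf M_{\mathbb T_2}$, verify the model property from the fact that $t$ preserves provability, and extract the cartesian functor $T$ from the classifying equivalence of Lemma 2.2. You in fact go further than the paper's own proof, which leaves the conservativity claim essentially implicit, by identifying $T\lbrace\mathbf x.\phi\rbrace$ with $\lbrace t\mathbf x.\,t(\phi)\rbrace$ and running the reflection argument through the provability-to-subobject dictionary (note that this dictionary is Lemma 2.4 in the paper, not Lemma 2.5 as you cite it).
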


There is a category $\mathbf{CartThy}$ of cartesian theories and their translations.  The above lemma amounts to showing that there is a functor $\mathbf{CartThy}\to\mathbf{Cart}$ to the category of small cartesian categories and cartesian functors.  The question of whether this is at least one half of an adjunction concerns internal languages of cartesian categories and is of interest but not needed in this paper.  A translation could also be defined as a functor of syntactic categories $T\colon\mathscr C_{\mathbb T_1}\to\mathscr C_{\mathbb T_2}$ that preserves monomorphisms, hence provability of sequents by Lemma 2.4.  Accordingly such a translation would be called conservative if it reflected monomorphisms, hence provability.  It should be noted, however, that this functor does not necessarily induce a function of signatures.  So, there is no purely syntactical relationship in this definition of translation.

Let $\mathbf x.\phi$ denote any formula-in-context of $\mathbb T$.  Let $\mathbb T[\phi]$ denote the cartesian theory obtained from $\mathbb T$ by adding $\top\vdash_{\mathbf x}\phi$ as an axiom.  Think of $\mathbb T[\phi]$ as obtained from $\mathbb T$ by ``adjoining'' the assumption $\mathbf x.\phi$.  Now, there is an evident translation $t\colon \mathbb T\to \mathbb T[\phi]$ given by simply including types $A$ and function symbols $f\colon A\to B$ of $\mathbb T$ into $\mathbb T[\phi]$ and extending the assignment.  This is a translation as a proof of $\psi\vdash\chi$ in $\mathbb T$ is also a proof in $\mathbb T[\phi]$.  This translation amounts to an inclusion of syntactic categories $\iota\colon \mathscr C_{\mathbb T}\to \mathscr C_{\mathbb T[\phi]}$.  The following attests to the intuition that $\mathbb T[\phi]$ is the theory of those formulas entailed by $\phi$ in $\mathbb T$. 

\begin{lemma} In the above notation, $\top\vdash\psi$ is provable in $\mathbb T[\phi]$ if, and only if, $\phi\vdash\psi$ is provable in $\mathbb T$.
\begin{proof}  On the one hand, if $\top\vdash\psi$ is provable in $\mathbb T[\phi]$, then there is a proof of $\phi\vdash \psi$ in $\mathbb T$, obtained by regarding any usage of $\phi$ as an assumption.  On the other hand, if $\phi\vdash\psi$ is provable in $\mathbb T$, then since $\top\vdash \phi$ is an axiom of $\mathbb T[\phi]$, an application of the cut rule in $\mathbb T[\phi]$ proves $\top\vdash\psi$ in $\mathbb T[\phi]$.  \end{proof}
\end{lemma}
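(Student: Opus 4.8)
The plan is to treat the two implications separately: the ``if'' direction is a one-line application of the cut rule, while the ``only if'' direction is a deduction-theorem argument by induction on derivations. Taking the easy direction first, suppose $\phi\vdash_{\mathbf{x}}\psi$ is provable in $\mathbb{T}$. Since $\mathbb{T}$ and $\mathbb{T}[\phi]$ share the same signature and inference rules and every axiom of $\mathbb{T}$ is retained in $\mathbb{T}[\phi]$, the identical derivation witnesses $\phi\vdash_{\mathbf{x}}\psi$ in $\mathbb{T}[\phi]$. Because $\top\vdash_{\mathbf{x}}\phi$ is by construction an axiom of $\mathbb{T}[\phi]$, a single application of the cut rule of D1.3.1 of \cite{elephant} to the pair $\top\vdash_{\mathbf{x}}\phi$ and $\phi\vdash_{\mathbf{x}}\psi$ produces $\top\vdash_{\mathbf{x}}\psi$ in $\mathbb{T}[\phi]$.

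For the converse I would argue by induction on the structure of a fixed $\mathbb{T}[\phi]$-derivation of $\top\vdash_{\mathbf{x}}\psi$. The governing observation is that such a derivation differs from a $\mathbb{T}$-derivation only in that it may invoke the extra leaf axiom $\top\vdash_{\mathbf{x}}\phi$, together with its substitution and weakening instances. The idea is to replace every such leaf by the reflexivity sequent $\phi\vdash_{\mathbf{x}}\phi$, which is derivable in any theory, and then to thread the resulting hypothesis $\phi$ down through the remaining rules. Concretely, I would prove the strengthened statement that whenever a sequent $\alpha\vdash_{\mathbf{y}}\beta$ occurs in the derivation, the sequent $(\phi\wedge\alpha)\vdash_{\mathbf{y}}\beta$, with $\phi$ weakened into the context $\mathbf{y}$, is provable in $\mathbb{T}$. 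Specializing to the root, where $\alpha=\top$, and using that $\phi\wedge\top$ is provably equivalent to $\phi$, then yields $\phi\vdash_{\mathbf{x}}\psi$ in $\mathbb{T}$, as desired.

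I expect the main obstacle to be the bookkeeping of contexts in this induction. The axiom $\top\vdash_{\mathbf{x}}\phi$ can enter a derivation only after a substitution or a weakening into a context $\mathbf{y}$ containing $\mathbf{x}$, so one must check that the relativizing conjunct is the correct instance of $\phi$ at each node and that conjoining it commutes with the substitution, weakening, conjunction, and existential rules of D1.3.1 of \cite{elephant}. A purely semantic alternative would pass to the classifying categories: by Lemma 2.4 the two sides amount to $\lbrace\mathbf{x}.\top\rbrace\leq\lbrace\mathbf{x}.\psi\rbrace$ in $\mathscr{C}_{\mathbb{T}[\phi]}$ and to $\lbrace\mathbf{x}.\phi\rbrace\leq\lbrace\mathbf{x}.\psi\rbrace$ in $\mathscr{C}_{\mathbb{T}}$, while the added axiom forces $\lbrace\mathbf{x}.\phi\rbrace=\lbrace\mathbf{x}.\top\rbrace$ in $\mathscr{C}_{\mathbb{T}[\phi]}$. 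This reduces the claim to the conservativity of the inclusion $\iota\colon\mathscr{C}_{\mathbb{T}}\to\mathscr{C}_{\mathbb{T}[\phi]}$ on the relevant subobjects, which is essentially the same statement; so I would favor carrying out the direct syntactic deduction theorem.
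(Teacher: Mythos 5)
Your proposal is correct and follows essentially the same route as the paper: the direction from $\mathbb{T}$ to $\mathbb{T}[\phi]$ is the same single application of the cut rule against the new axiom $\top\vdash_{\mathbf{x}}\phi$, and your deduction-theorem induction is just a careful unpacking of what the paper compresses into ``regarding any usage of $\phi$ as an assumption.'' Your version supplies more of the context-bookkeeping detail that the paper leaves implicit, but the underlying argument is identical.
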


\section{An Undecidable Theory}

The classical first-order theory of modules over $k\langle X,Y\rangle$ is known to be undecidable \cite{Baur},\cite{Prest}.  That the cartesian theory $\mathbb T_{k\langle X,Y\rangle}$ developed above is also undecidable will be established presently.  For the proof, we need the fact that there is a $2$-generator finitely presented monoid with an undecidable word problem.  That there is such a thing has been established for example by Y. Matiyasevich \cite{Matiyasevich}.  Use the notation $N =\langle a,b \mid u_1=v_1,\dots, u_r=v_r\rangle$ for a fixed choice of one such monoid.

A word on how to view the elements and relations  of $N$ is necessary here.  That is, the identifications $u_1=v_1,\dots, u_r=v_r$ should be seen as rewrite rules.  In more detail, $u_1, \dots, u_r,v_1,\dots, v_r$ are words in the letters $a$ and $b$ viewed in the free monoid on the set $\lbrace a,b\rbrace$.  Thus, the expressions in terms of the letters $a$ and $b$ are unique.  The equations $u_i = v_i$ are really a shorthand for possible replacements.  That is, in any given word $u$ free over $\lbrace a,b\rbrace$ it is permitted to replace an occurrence of $u_i$ by $v_i$, or \emph{vice versa}, obtaining in either case what is considered to be an equivalent word over $\lbrace a,b\rbrace$.  Thus, words $u$ and $v$ over $\lbrace a,b\rbrace$ are considered equivalent if there is a finite sequence of replacements of this type by which either one is obtained from the other.  As this is an equivalence relation, $N$ with its presentation $N =\langle a,b \mid u_1=v_1,\dots, u_r=v_r\rangle$ is a quotient of the free monoid on $\lbrace a,b\rbrace$ modulo replacement.  And so, equality in $N$ is the same thing as equivalence in the free monoid on $\lbrace a,b\rbrace$ modulo replacement. 

By free generation in $k\langle X,Y\rangle$, the association $a\mapsto X$ and $b\mapsto Y$ induces a bijection between words over $\lbrace a, b\rbrace$ and the monomials in the ``letters'' $X$ and $Y$ of $k\langle X,Y\rangle$.  Use the notation $f_i$ and $g_i$ in $k\langle X,Y\rangle$ for the images of $u_i$ and $v_i$ respectively under this correspondence.  Thus, $f_i$ and $g_i$ do double duty as unary function symbols of $\mathbb T_{k\langle X,Y\rangle}$.  In what follows, $u$ and $v$ will denote arbitrary words of $N$.  In such instances, $f$ and $g$ will denote the corresponding images monomials in $k\langle X,Y\rangle$. 

\begin{lemma}  Consider the cartesian theory $\mathbb T_{k\langle X,Y\rangle}$.  For simplicity of notation denote this theory by $\mathbb T$.  In the notation of the preceding discussion let $\phi$ denote the formula
\[  \bigwedge_{j=1}^r (f_{i}(x) = g_{i}(x)).
\]
There is then a well-defined functor $\bar\rho\colon N\to \mathscr C_{\mathbb T[\phi]}$. 
\end{lemma}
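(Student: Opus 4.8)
The plan is to realize $\bar\rho$ as a monoid homomorphism obtained by descent from the free monoid on $\lbrace a,b\rbrace$. Regard $N$ as a one-object category whose endomorphism monoid is the presented monoid $\langle a,b\mid u_1=v_1,\dots,u_r=v_r\rangle$; its unique object will be sent to $\lbrace x.\top\rbrace$, with $x$ of the single sort $A$, this being the value of $A$ under the canonical interpretation of $\mathbb T[\phi]$ in $\mathscr C_{\mathbb T[\phi]}$. Since $N$ is by construction the quotient of the free monoid $F$ on $\lbrace a,b\rbrace$ by the smallest congruence containing the relators, it suffices to produce a functor $\rho\colon F\to\mathscr C_{\mathbb T[\phi]}$ that equates $\rho(u_i)$ and $\rho(v_i)$ for each $i$; the universal property of the presentation then yields a unique $\bar\rho$ with $\bar\rho\circ(F\to N)=\rho$.

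First I would define $\rho$ on $F$. By \emph{freeness} a functor out of $F$ is determined by the images of the generators, so I would set $\rho(a)=[X(x)=y]$ and $\rho(b)=[Y(x)=y]$, the canonical interpretations of the unary function symbols $X$ and $Y$, each regarded as an endomorphism of $\lbrace x.\top\rbrace$ (the codomain $\lbrace y.\top\rbrace$ being the same object up to $\alpha$-equivalence). Extending multiplicatively sends a word $w$ to the composite of the interpretations of its letters, and Lemma 2.3, applied repeatedly, identifies this composite with $[h(x)=y]$, where $h$ is the monomial in $k\langle X,Y\rangle$ corresponding to $w$ under $a\mapsto X$, $b\mapsto Y$. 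The empty word goes to the identity $[x=y]$, and associativity of the module action matches concatenation of words with multiplication of monomials, so $\rho$ is a genuine monoid homomorphism into $\mathscr C_{\mathbb T[\phi]}(\lbrace x.\top\rbrace,\lbrace x.\top\rbrace)$.

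Next I would check that $\rho$ respects the relators. By the previous step $\rho(u_i)=[f_i(x)=y]$ and $\rho(v_i)=[g_i(x)=y]$, and two arrows of a syntactic category agree exactly when their representing formulas are provably equivalent. It therefore suffices to derive both $f_i(x)=y\vdash g_i(x)=y$ and its converse in $\mathbb T[\phi]$. This is where the adjoined axiom does its work: $\top\vdash_x\phi$ is an axiom of $\mathbb T[\phi]$ and $\phi$ contains the conjunct $f_i(x)=g_i(x)$, so transitivity of equality gives $y=f_i(x)$ precisely when $y=g_i(x)$. Hence $\rho(u_i)=\rho(v_i)$ for every $i$, and the descent described above produces the desired functor $\bar\rho\colon N\to\mathscr C_{\mathbb T[\phi]}$.

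The crux, and the only step needing care, is \emph{well-definedness}: an element of $N$ is an equivalence class of words, so $\bar\rho$ must not depend on the chosen representative. The free-monoid formulation isolates exactly what must be verified, namely the finitely many relator equalities, and lets the universal property of the presentation propagate the identification to all equivalent words without a separate induction on replacement steps. One should nonetheless confirm that the conventions line up: that Lemma 2.3 is applied in the order in which letters compose, and that the monomial attached to a concatenation is the product, in the correct order, of the monomials attached to the factors, so that $\rho$ is covariant rather than its opposite.
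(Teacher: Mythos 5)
Your proposal is correct and follows essentially the same route as the paper: define $\rho$ on the free monoid on two generators (the paper realizes this as the free category on the figure-eight quiver), use Lemma 2.3 to identify $\rho(w)$ with $[h(x)=y]$ for the corresponding monomial $h$, verify the relator equalities $\rho(u_i)=\rho(v_i)$ by deriving the provable equivalence of $f_i(x)=y$ and $g_i(x)=y$ in $\mathbb T[\phi]$ from the adjoined axiom and transitivity of equality, and then descend along the quotient $F\twoheadrightarrow N$. The only difference is cosmetic (free monoid presentation versus the graph--category adjunction), and your explicit attention to covariance and the order of composition is a reasonable extra check.
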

\begin{proof} Recall from II.7 of \cite{MacLane} that there is a free-underlying adjunction $F\colon \mathbf{Grph} \rightleftarrows \mathbf{Cat}\colon U$ where $FG$ is the free category on the graph $G$.  Let $Q$ denote the figure-eight quiver. This is the directed graph with one vertex $\bullet$ and two arrows $\alpha$ and $\beta$ whose source and target are each that vertex.  As a category $FQ$ is isomorphic to the free monoid on two generators viewed as a category.  The graph homomorphism $Q\to U\mathscr C$ given by $\bullet\mapsto \lbrace x.\top\rbrace$ and $\alpha\mapsto [X(x)=y]$, $\beta\mapsto [ Y(x)=y]$ extends to a \emph{bona fide} functor $\rho\colon FQ\to \mathscr C$ whose action on arrows $w\in FQ$ is $\rho(w) = [w(x)=y]\colon \lbrace x.\top\rbrace\to\lbrace y.\top\rbrace$ by Lemma 2.3.  The free monoid $FQ$ admits an epimorphism $e\colon FQ\to N$.  The claim is that there is a functor $N\to \mathscr C_{\mathbb T[\phi]}$ making a commutative diagram
$$\begin{tikzpicture}
\node(1){$FQ$};
\node(2)[node distance=1in, right of=1]{$\mathscr C$};
\node(3)[node distance=.8in, below of=1]{$N$};
\node(4)[node distance=1in, right of=3]{$\mathscr C_{\mathbb T[\phi]}$};
\draw[->](1) to node [above]{$\rho$}(2);
\draw[->](2) to node [right]{$\iota$}(4);
\draw[->>](1) to node [left]{$e$}(3);
\draw[->,dashed](3) to node [below]{$\bar\rho$}(4);
\end{tikzpicture}$$  
where $\iota$ denotes the inclusion described in $\S$2.3.  The desired $\bar\rho$ is given by $\iota\circ\rho$ provided it preserves the relations giving $N$.  It suffices to show that $\iota\circ\rho(u_i) = \iota\circ\rho(v_i)$ since these are functors.  But the equation is equivalent to the statement that $[f_i(y)=z]=[g_i(y) = z]$ holds in $\mathscr C_{\mathbb T[\phi]}$.  That the equations $f_i(y)=z$ and $g_i(y)=z$ are provably equivalent in $\mathbb T[\phi]$ is shown using the cut rule.  One derivation is
\[ \dfrac{\dfrac{\dfrac{}{\top\vdash \phi[y/x]}}{f_i(y)=z\vdash \phi[y/x]\wedge f_i(y)=z}\qquad \dfrac{}{\phi[y/x] \wedge f_i(y)=z\vdash g_i(y)=z}}{f_i(y)=z\vdash g_i(y)=z}
\]
using transitivity of equality on the right and substitution from the axiom $\top\vdash\phi$ on the upper-left.  The other derivation is analogous.  Thus, $\bar\rho$ is well-defined.  The square commutes by definition.  \end{proof}

The lemma allows us to mimic the undecidability proof of W. Baur \cite{Baur} that was later adapted by M. Prest to show that $k\langle X,Y\rangle$ has an undecidable classical first-order theory of modules.  Ultimately, however, our methods avoid the technicalities of ``extension by definitions'' used in both proofs. 

\begin{theo}  The cartesian theory $\mathbb T := \mathbb T_{k\langle X,Y\rangle}$ is undecidable.
\end{theo}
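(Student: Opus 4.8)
The plan is to reduce the word problem of the monoid $N$ to the decision problem for $\mathbb T$, using the functor $\bar\rho\colon N\to\mathscr C_{\mathbb T[\phi]}$ supplied by Lemma 3.1. For a word $w$ over $\{a,b\}$ write $f_w$ for the associated monomial in $k\langle X,Y\rangle$. The central claim to establish is that, for any two words $u,v$ with monomial images $f$ and $g$,
\[ u=v \text{ in } N \iff \phi\vdash_x f(x)=g(x) \text{ is provable in } \mathbb T. \]
Since the assignment $(u,v)\mapsto(\phi\vdash_x f(x)=g(x))$ is effective, this equivalence shows that any algorithm deciding provability in $\mathbb T$ would decide equality in $N$. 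As $N$ was chosen to have an undecidable word problem, $\mathbb T$ is undecidable.

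To prove the equivalence I would first rephrase the right-hand side inside $\mathscr C_{\mathbb T[\phi]}$. By Lemma 2.6, $\phi\vdash_x f(x)=g(x)$ is provable in $\mathbb T$ exactly when $\top\vdash_x f(x)=g(x)$ is provable in $\mathbb T[\phi]$; and a short derivation substituting equals (evaluating at $y=f(x)$ using reflexivity) shows this last sequent is provable precisely when $f(x)=y$ and $g(x)=y$ are provably equivalent in $\mathbb T[\phi]$, that is, precisely when the arrows $\bar\rho(u)=[f(x)=y]$ and $\bar\rho(v)=[g(x)=y]$ coincide as morphisms $\{x.\top\}\to\{y.\top\}$. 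The forward implication is then immediate: since $\bar\rho$ is a \emph{bona fide} functor on $N$ by Lemma 3.1, $u=v$ in $N$ forces $\bar\rho(u)=\bar\rho(v)$, so the sequent is provable.

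The reverse implication is the heart of the matter and I would argue it by contraposition, separating distinct words by a concrete model. The relevant model is the monoid algebra $kN$, viewed as a $k\langle X,Y\rangle$-module through the algebra surjection $k\langle X,Y\rangle\twoheadrightarrow kN$ induced by $X\mapsto a$, $Y\mapsto b$. Because every relation $u_i=v_i$ holds in $N$, the monomials $f_i$ and $g_i$ coincide as elements of $kN$ and hence act identically on all of $kN$; thus $kN\models\phi$ and $kN$ is a model of $\mathbb T[\phi]$. Evaluating at the identity basis vector gives $f\cdot 1$ and $g\cdot 1$ equal to the standard basis vectors of $kN$ indexed by the classes of $u$ and $v$. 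If $u\neq v$ in $N$ these are distinct basis vectors, so $f(x)=g(x)$ fails at $x=1$; by soundness (the provable-implies-satisfied direction underlying Lemma 2.4) the sequent $\top\vdash_x f(x)=g(x)$ is not provable in $\mathbb T[\phi]$, and Lemma 2.6 returns the required non-provability in $\mathbb T$.

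I expect the main obstacle to be the clean verification that $kN$ is a legitimate $\mathbf{Set}$-model of $\mathbb T[\phi]$: that the $k\langle X,Y\rangle$-action descends through the surjection, that it genuinely validates every conjunct of $\phi$, and that distinct elements of $N$ remain linearly independent — hence distinguishable — in $kN$. Everything else reduces to bookkeeping with the derivation rules and with Lemmas 2.6 and 3.1; the substantive content lies in converting the combinatorial undecidability of the word problem into this module-theoretic separation.
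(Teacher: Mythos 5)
Your proposal is correct and follows essentially the same route as the paper: the forward direction via the functor $\bar\rho$ of Lemma 3.1 together with the deduction-theorem-style Lemma 2.6, and the reverse direction by contraposition using the monoid algebra $k[N]$ as a $k\langle X,Y\rangle$-module that satisfies $\phi$ but separates $f$ and $g$ at a basis vector, concluding by soundness. The only cosmetic difference is that you evaluate at the identity basis vector, whereas the paper picks an arbitrary $\alpha_0$ witnessing $u\alpha_0\neq v\alpha_0$.
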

\begin{proof}  Throughout use the notation from the discussion above.  In particular, $u$ and $v$ are any fixed words in $a$ and $b$.  The following statements (1) and (2) will be seen to be equivalent.
\begin{equation} \text{The words}\;u\;\text{and} \; v\;\text{over $\lbrace a,b\rbrace$ are equivalent as a result of the relations defining $N$}.
\end{equation}
\begin{equation} \text{The sequent}\;\bigwedge_{i=1}^r (f_{i}(x) = g_{i}(x)) \vdash_{x} f(x) =g(x)\;\text{is provable}.
\end{equation}
Again let $\phi$ denote the antecedent of (2); and throughout let $\mathscr C$ denote the syntactic category of $\mathbb T_{k\langle X,Y\rangle}$.  That (1) implies (2) is obtained from the preceding lemma.  For if $u=v$ in $N$, it follows that $\bar\rho(u) = \bar\rho(v)$ holds in $\mathscr C_{\mathbb T[\phi]}$ so that $\phi\vdash f(x)= g(x)$ is therefore provable in $\mathbb T$ by Lemma 2.8.

Now, assume that (1) fails; that is, neither $u$ nor $v$ can be obtained from the other by a finite sequence of replacements.  The claim is that there is a model in which (2) fails.  To construct this, let $k[N]$ denote the monoid-algebra on $N$.  This is the set of functions $\alpha\colon N\to k$ taking only finitely many nonzero values, viewed as an algebra containing copies of both $N$ and $k$ as in \cite{Lang}.  In particular elements of $u\in N$ are identified with Kroenecker-$\delta$ functions, namely, $\delta_u\colon N\to k$ taking the value $1\in k$ at $u$ but $0$ at all others.  Since $\delta_u\delta_v=\delta_{uv}$ holds, it follows that $N$ acts on $k[N]$ by the ring multiplication $u\alpha :=\delta_u\alpha$. 

Given $u,v\in N$, the action has the property that if $u\alpha=v\alpha$ for all $\alpha\in k[N]$, then $u=v$.  Thus, the assumption that $u\neq v$ in $k[N]$ means that there is $\alpha_0\in k[N]$ with $u\alpha_0\neq v\alpha_0$.  But $u_i\alpha_0 = v_i\alpha_0$ holds for all $i$ since $u_i$ and $v_i$ are equal in $k[N]$.  But this essentially proves that (2) must fail.  For $k[N]$ is a module over $k\langle X,Y\rangle$, hence a model, because there is an algebra homomorphism $k\langle X,Y\rangle\to k[N]$.  Thus, $\alpha_0$ yields an element of the subobject interpreting the antecedent of (2) in the model.  But $\alpha_0$ is not in the subobject interpreting the consequent, as $f(x_0)\neq g(x_0)$.  Therefore, $[[x.\phi ]]_{k[N]} \leq [[x.f(x) = g(x)]]_{k[N]}$ does not hold in $k[N]$.  Thus, (2) is not provable, by soundness.  This shows that (2) implies (1).

There is thus a class of sequents of the form of (2) above such that any such sequent is provable if, and only if, a corresponding equivalence $u=v$ holds.  Thus, if the set of sequents of the form (2) were decidable, the word problem for $N$ would be as well.  Therefore, $\mathbb T_{k\langle X,Y\rangle}$ must be undecidable.  \end{proof}

As a remark, it should be noted that the proof given above appears to work in Boolean first-order categorical logic with one change.  The sequent (2) should be replaced by 
\[\bigwedge_{i=1}^r \forall x(f_{i}(x) = g_{i}(x)) \vdash_{x} \forall x(f(x) =g(x)).
\]
This is the sequent used in \cite{Baur}.  This indicates that our technique recaptures the original undecidability result.

\section{Representation Embeddings and ``Wild Implies Undecidable''}

\subsection{Indecomposable Projective Cartesian Copresheaves}

An object $p$ of a category $\mathscr C$ is projective if given an epimorphism $f\colon c\to p$ there is a section $s\colon p\to c$ such that $fs=1_p$.  In this situation the object $p$ is a retract of $c$.  For the main theorem, finitely-generated projective models of a cartesian theory need to be characterized precisely.  But in fact finitely-generated projectives of $\mathbf{Cart}(\mathscr C,\mathbf{Set})$ can be characterized whenever $\mathscr C$ is small, cartesian and Cauchy-complete in the sense that every idempotent splits.  The following lemma gives examples of such categories.

\begin{lemma} For any cartesian theory, $\mathscr C_\mathbb{T}$ is Cauchy-complete. 
\end{lemma}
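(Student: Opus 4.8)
The plan is to reduce Cauchy-completeness to the existence of equalizers, which $\mathscr{C}_{\mathbb{T}}$ already possesses since it is cartesian by D1.4.2. The underlying general fact is that any category having equalizers of parallel pairs of the form $(e,1_A)$ splits all of its idempotents, so no genuinely syntactic construction is required beyond the finite-limit structure already recalled; the argument is entirely formal.

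First I would fix an idempotent $e\colon A\to A$ in $\mathscr{C}_{\mathbb{T}}$, so that $e\circ e = e$, and form the equalizer $i\colon E\to A$ of the parallel pair $e,1_A\colon A\to A$. This exists because $\mathscr{C}_{\mathbb{T}}$ is cartesian. By construction $e\circ i = i$, and $i$ is a monomorphism, being an equalizer.

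Next I would produce the retraction. Since $e\circ e = e = 1_A\circ e$, the morphism $e$ itself equalizes the pair $(e,1_A)$, so the universal property of $i$ yields a unique $r\colon A\to E$ with $i\circ r = e$. It then remains to verify $r\circ i = 1_E$: from $i\circ r\circ i = e\circ i = i = i\circ 1_E$ together with the fact that $i$ is monic, one cancels $i$ to obtain $r\circ i = 1_E$. Hence $e = i\circ r$ with $r\circ i = 1_E$, which exhibits a splitting of $e$; consequently every idempotent of $\mathscr{C}_{\mathbb{T}}$ splits and $\mathscr{C}_{\mathbb{T}}$ is Cauchy-complete.

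I do not expect any real obstacle here: each step is a formal consequence of the universal property of equalizers and of the fact that equalizers are monic. The only point requiring care is the direction of the factorization, namely checking that it is $e$ rather than $1_A$ that factors through the equalizer and thereby induces the retraction; but this is immediate from idempotency. If an explicit syntactic splitting object were desired, one could instead unwind the equalizer construction of D1.4.2 to present $E$ concretely as a subobject of $\{\mathbf{x}.\phi\}$ cut out by the condition defining the equalizer of $[\theta]$ with the identity, though this level of detail is unnecessary for the statement as given.
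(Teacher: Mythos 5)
Your proof is correct, but it takes a genuinely different route from the paper's. You use the purely formal fact that any category with equalizers splits its idempotents: the equalizer $i\colon E\to A$ of the pair $(e,1_A)$ receives a factorization $i\circ r=e$ because $e\circ e=1_A\circ e$, and monicity of $i$ forces $r\circ i=1_E$. Every step is a standard consequence of the universal property, and since $\mathscr C_{\mathbb T}$ is cartesian by D1.4.2 (a fact the paper has already recalled), the conclusion follows with no reference to the syntactic presentation of $\mathscr C_{\mathbb T}$ at all. The paper instead argues syntactically: it invokes D1.4.4(i), the characterization of isomorphisms $[\theta]$ as those $\theta$ that are $\mathbb T$-provably functional in both directions, claims that every idempotent endomorphism $[\theta]\colon\{\mathbf x.\phi\}\to\{\mathbf x.\phi\}$ is therefore an isomorphism, and concludes that an idempotent automorphism must be the identity, which splits trivially. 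That argument is specific to syntactic categories, and the step asserting that every idempotent is an isomorphism is the delicate one; your argument sidesteps it entirely. What your approach buys is generality and robustness: it shows that \emph{every} cartesian (indeed, every finitely complete, or merely equalizer-admitting) category is Cauchy-complete, so the hypothesis ``Cauchy-complete'' in Theorem 4.2 and Corollary 4.3 is automatic for any small cartesian $\mathscr C$, not just for syntactic categories. What the paper's approach would buy, if one wanted it, is a concrete syntactic description of how idempotents behave in $\mathscr C_{\mathbb T}$, but that extra information is not used anywhere else in the paper.
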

\begin{proof}  Use the property D1.4.4(i) of \cite{elephant}, namely, that $[\theta]\colon\lbrace \mathbf{x}.\phi\rbrace\to \lbrace \mathbf{y}.\psi\rbrace$ is an isomorphism if, and only if, $\theta$ is also $\mathbb T$-provably functional from $\lbrace \mathbf{y}.\psi\rbrace$ to $\lbrace \mathbf{x}. \phi\rbrace$.  So, any idempotent $[\theta]\colon \lbrace \mathbf{x}.\phi\rbrace\to \lbrace \mathbf{x}.\phi\rbrace$ is an isomorphism; but any idempotent automorphism is necessary the identity, which splits.  \end{proof}

Now, whether or not $\mathscr C$ is Cauchy-complete, that representable functors are projective can be proved using the naturality of Yoneda's Lemma.  Any projective of $[\mathscr C,\mathbf{Set}]$ remains projective in the subcategory $\mathbf{Cart}(\mathscr C,\mathbf{Set})$.  And it has been observed that representables are finitely-generated in $\mathbf{Cart}(\mathscr C,\mathbf{Set})$.  Thus, every representable is finitely-generated projective in $\mathbf{Cart}(\mathscr C,\mathbf{Set})$ for $\mathscr C$ small.  The question is now whether the finitely-generated projectives of $\mathbf{Cart}(\mathscr C,\mathbf{Set})$ can be characterized.  The following gives the answer when $\mathscr C$ is cartesian and Cauchy-complete.  Its proof is based on that of 5.22 in \cite{AR}.

\begin{theo} If $\mathscr C$ is small, cartesian and Cauchy-complete, then each finitely-generated projective of $\mathbf{Cart}(\mathscr C,\mathbf{Set})$ is representable.
\end{theo}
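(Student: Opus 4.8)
The plan is to show that any finitely-generated projective $P$ of $\mathbf{Cart}(\mathscr C,\mathbf{Set})$ is a retract of a representable, and then to use Cauchy-completeness to upgrade such a retract to a representable. First I would invoke the structural fact recorded in \S1.2: since $P$ is cartesian and $\mathscr C$ is cartesian, $P$ is a filtered colimit of representables, say $P \cong \operatorname{colim}_{i\in I}\mathbf y(c_i)$ with $I$ filtered, realized over the category of elements. Let $\mu_i\colon \mathbf y(c_i)\to P$ denote the colimit coprojections. The difficulty is that finite generation of $P$, by Definition 1.1, guarantees only that $\mathbf{Cart}(\mathscr C,\mathbf{Set})(P,-)$ preserves filtered \emph{unions}, not arbitrary filtered colimits; so I cannot directly conclude that $1_P$ factors through some $\mathbf y(c_i)$ as one could were $P$ finitely presentable. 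This is the crux of the argument, and it is exactly the point addressed by the strategy of 5.22 in \cite{AR}.

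To bridge this gap I would replace the filtered colimit by a filtered union of subobjects of $P$. Factor each $\mu_i$ through its image $P_i := \operatorname{im}(\mu_i)\hookrightarrow P$; this is legitimate because $\mathbf{Cart}(\mathscr C,\mathbf{Set})$ inherits epi-mono factorizations and image copresheaves are again cartesian, as noted in \S1.2. Whenever $i\to k$ in $I$, the identity $\mu_i=\mu_k\circ(\text{transition})$ forces $P_i\subseteq P_k$, so $i\mapsto P_i$ is a filtered diagram of subobjects with monic transition maps. Computing the colimit pointwise shows $\bigcup_i P_i = P$, since every element of each $P(c)$ lies in the image of some coprojection. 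Thus $P=\bigcup_i P_i$ is genuinely a filtered union, and finite generation now applies: the canonical map $\operatorname{colim}_i\mathbf{Cart}(P,P_i)\to \mathbf{Cart}(P,P)$ is a bijection. Pulling $1_P$ back along this bijection yields an index $i$ and a map $g\colon P\to P_i$ with $\iota_i\circ g=1_P$, where $\iota_i\colon P_i\hookrightarrow P$. As $\iota_i$ is both monic and split epic it is an isomorphism, so $\mu_i$ is, up to this isomorphism, an epimorphism $\mathbf y(c_i)\twoheadrightarrow P$.

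Now projectivity of $P$ furnishes a section of this epimorphism, exhibiting $P$ as a retract of the representable $\mathbf y(c_i)$. The final step is to show a retract of a representable is representable. The retraction data give an idempotent $\pi=s\,\mu_i$ on $\mathbf y(c_i)$ of which $P$ is a splitting. Since $\mathbf y$ is full and faithful, $\pi$ corresponds to an idempotent on $c_i$ in $\mathscr C$; Cauchy-completeness splits this idempotent through some object $d$, and applying $\mathbf y$ splits $\pi$ through $\mathbf y(d)$. As idempotent splittings are unique up to isomorphism, $P\cong \mathbf y(d)$ is representable. I expect the only genuinely delicate point to be the passage from the filtered colimit of representables to the filtered union of images, since it is precisely there that the weaker hypothesis of finite generation, as opposed to finite presentability, must be accommodated; the projectivity and Cauchy-completeness steps are then routine finishing moves.
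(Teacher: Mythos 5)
Your proposal is correct and follows essentially the same route as the paper's proof: pass from the filtered colimit of representables to the filtered union of the images of the coprojections, apply finite generation to split one of the inclusions $P_i\hookrightarrow P$, then use projectivity and Cauchy-completeness to conclude $P$ is a retract of, hence isomorphic to, a representable. Your explicit handling of the filtered-union versus filtered-colimit distinction and of the idempotent-splitting step merely spells out details the paper leaves terse.
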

\begin{proof}
Let $P\colon \mathscr C\to\mathbf{Set}$ denote a cartesian copresheaf.  This is canonically a filtered colimit of representables as in \S 1.2.  The colimit can be taken in $\mathbf{Cart}(\mathscr C,\mathbf{Set})$.  Take one of the canonical maps $\mathbf y(c)\to P$ from the colimit.  Denote this by $\tilde x$ with $x$ corresponding to the canonical map under the Yoneda correspondence.  This map factors as an epimorphism followed by a monomorphism
$$\begin{tikzpicture}
\node(1){$\mathbf y(c)$};
\node(2)[node distance=.7in, right of=1]{$$};
\node(3)[node distance=.7in, right of=2]{$P$};
\node(4)[node distance=.6in, below of=2]{$I_{c,x}$};
\draw[->](1) to node [above]{$\tilde x$}(3);
\draw[->](1) to node [left]{$$}(4);
\draw[>->,dashed](4) to node [below]{$\;\;\;\;\;\;\;m_{c,x}$}(3);
\end{tikzpicture}$$
using the canonical image copresheaf.  The images $I_{c,x}$ are cartesian.  Thus, together over all pairs $(c,x)$, these images give another diagram in $\mathbf{Cart}(\mathscr C,\mathbf{Set})$ indexed by the opposite of the category of elements.  It then follows by uniqueness of colimits that $P\cong \lim_{\to} I_{c,x}$. 

This shows that $P$ is a filtered union.  That $P$ is finitely generated in $\mathbf{Cart}(\mathscr C,\mathbf{Set})$ implies that the canonical map
\[ \lim_{\to} \mathrm{Hom}(P,I_{c,x})\longrightarrow \mathrm{Hom}(P,P) 
\]
is an isomorphism in $\mathbf{Set}$.  Now, the colimit on the left is a quotient of the disjoint union of the $\mathrm{Hom}(P,I_{c,x})$ as in 2.13.3 of \cite{Handbook1}.  Thus, the isomorphism above yields an arrow $f\colon P\to I_{c,x}$ for some $(c,x)$ making $m_{c,x}f=1_P$.  This shows that $m_{c,x}$ is an isomorphism.  So, the corresponding representable $\mathbf y(c)$ admits an epimorphism to $P$.  So, if $P$ is projective, then $P$ is a retract of $\mathbf y(c)$.  Retracts of representables are representable when $\mathscr C$ is Cauchy complete as in A1.1.10 of \cite{elephant} and 6.5.6 of \cite{Handbook1}.  
\end{proof}

\begin{cor}  For $\mathscr C$ small and Cauchy-complete, $\mathscr C^{op}$ is (weakly) equivalent to the full subcategory of $\mathbf{Cart}(\mathscr C,\mathbf{Set})$ of finitely-generated projectives.
\end{cor}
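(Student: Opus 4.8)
The plan is to show that the Yoneda embedding itself furnishes the claimed equivalence, once it is corestricted to the finitely-generated projectives. (Here $\mathscr C$ is of course taken to be cartesian as well, so that $\mathbf{Cart}(\mathscr C,\mathbf{Set})$ and the hypotheses of Theorem 4.3 make sense.) First I would recall from \S1.2 that the Yoneda embedding $\mathbf y\colon\mathscr C^{op}\to[\mathscr C,\mathbf{Set}]$ factors through $\mathbf{Cart}(\mathscr C,\mathbf{Set})$, since each representable copresheaf is cartesian. I would then invoke the two observations recorded just before Theorem 4.3: every representable is projective in $[\mathscr C,\mathbf{Set}]$ and remains projective in the subcategory $\mathbf{Cart}(\mathscr C,\mathbf{Set})$, and every representable is finitely generated there. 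Hence each $\mathbf y(c)$ is a finitely-generated projective, so $\mathbf y$ corestricts to a functor from $\mathscr C^{op}$ into the full subcategory of finitely-generated projectives of $\mathbf{Cart}(\mathscr C,\mathbf{Set})$.

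Next I would verify that this corestricted functor is fully faithful and essentially surjective. Full faithfulness is immediate from Yoneda's Lemma, which supplies the natural bijection $\mathrm{Nat}(\mathbf y(c),\mathbf y(d))\cong \mathbf y(d)(c)=\mathscr C(d,c)=\mathscr C^{op}(c,d)$; since the target is a \emph{full} subcategory of $\mathbf{Cart}(\mathscr C,\mathbf{Set})$, this hom-set bijection is inherited unchanged. Essential surjectivity is exactly the content of Theorem 4.3: under the hypotheses that $\mathscr C$ is small, cartesian and Cauchy-complete, every finitely-generated projective $P$ is representable, hence isomorphic to some $\mathbf y(c)$ and so in the essential image of $\mathbf y$. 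A functor that is both fully faithful and essentially surjective is an equivalence of categories, which delivers the corollary.

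The one point deserving care — and the reason the statement reads \emph{weakly} equivalent — is that the corestricted Yoneda embedding is injective on objects but need not be surjective on objects: the full subcategory of finitely-generated projectives will in general contain copresheaves isomorphic to, but not literally equal to, the representables $\mathscr C(c,-)$. What one obtains is therefore an equivalence of categories rather than an isomorphism, and the word \emph{weak} signals that the two categories agree only up to equivalence. I expect no genuine obstacle beyond Theorem 4.3 itself; everything else is a direct application of the Yoneda machinery assembled in \S1.2 together with the projectivity and finite generation of representables.
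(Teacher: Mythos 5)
Your proposal is correct and follows essentially the same route as the paper: corestrict the Yoneda embedding, use its full faithfulness for the hom-set bijection, and use Theorem 4.3 for essential surjectivity onto the finitely-generated projectives. The only minor divergence is the reading of \emph{weak}: the paper means a fully faithful, essentially surjective functor that is upgraded to a genuine equivalence only after invoking the Axiom of Choice to fix representatives, whereas you read it as ``equivalence rather than isomorphism''; this does not affect the correctness of the argument.
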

\begin{proof}
The Yoneda embedding is fully faithful.  Thus, it identifies $\mathscr C^{op}$ as weakly equivalent to the full subcategory of $[\mathscr C,\mathbf{Set}]$ of representables, hence to the full subcategory of $\mathbf{Cart}(\mathscr C,\mathbf{Set})$ of finitely-generated projectives by Theorem 4.2.  This weak equivalence can be made into a strong equivalence using the Axiom of Choice.  By fixing a representation for each representable functor, any morphism between two will be of the form $\mathscr C(h,-)$ for some arrow $h$ of $\mathscr C$ as in the corollary to Yoneda's Lemma in \cite{MacLane}.
\end{proof}

\subsection{Main Theorem}

Let $\mathbb T$ denote a cartesian theory with $\mathbb T$-$\mathbf{Mod}(\mathscr D)$ its category of models in a cartesian category $\mathscr D$.  The previous results show that since the syntactic category is Cauchy-complete, its opposite is equivalent to the full subcategory of finitely-generated projectives models.

\begin{define} A representation embedding of cartesian theories $\mathbb T_1$ and $\mathbb T_2$ is a functor
\[ E\colon \mathbb T_1\text -\mathbf{Mod}(\mathbf{Set})\to\mathbb T_2\text{-}\mathbf{Mod}(\mathbf{Set})
\]
that preserves finitely-generated projective models, and that both preserves and reflects epimorphisms between objects of the full subcategory of finitely-generated projective models.
\end{define}

This recalls standard definitions in \cite{Simson} and \cite{Benson}, but is phrased in a degree of generality appropriate for what can be proved in Theorem 4.5.  And indeed a wild algebra, as described in $\S$1, does come with a representation embedding in this sense.  For the functor $M\otimes_{k\langle X,Y\rangle}-$ has $M$ finitely-generated and free over $k\langle X,Y\rangle$.  Thus, it preserves finitely-generated modules and finitely-generated projectives in particular.  It is also faithfully flat, thus preserving and reflecting all epimorphisms. 

\begin{theo}  A representation embedding of cartesian theories $E\colon \mathbb T_1\to \mathbb T_2$ induces a functor of syntactic categories $T\colon \mathscr C_{\mathbb T_1}\to\mathscr C_{\mathbb T_2}$ that preserves and reflects provability.
\end{theo}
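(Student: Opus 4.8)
The plan is to exploit the dualities established in the preceding results, which convert the semantic data of the functor $E$ into syntactic data about the syntactic categories. By Corollary 4.3, for each $i$ the opposite category $\mathscr C_{\mathbb T_i}^{op}$ is equivalent to the full subcategory $\mathbf{Cart}(\mathscr C_{\mathbb T_i},\mathbf{Set})_{fp}$ of finitely-generated projectives of the category of models. First I would use Lemma 2.2 to identify $\mathbb T_i$-$\mathbf{Mod}(\mathbf{Set})$ with $\mathbf{Cart}(\mathscr C_{\mathbb T_i},\mathbf{Set})$, so that $E$ becomes a functor between categories of cartesian set-valued functors. Since $E$ preserves finitely-generated projective models by hypothesis, it restricts to a functor between the full subcategories of finitely-generated projectives. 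Transporting across the equivalences of Corollary 4.3 then yields a functor $\mathscr C_{\mathbb T_1}^{op}\to\mathscr C_{\mathbb T_2}^{op}$, equivalently a functor $T\colon \mathscr C_{\mathbb T_1}\to\mathscr C_{\mathbb T_2}$.

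Next I would verify that $T$ preserves and reflects provability. By the second part of Lemma 2.4, provability of a cartesian sequent $\phi\vdash_{\mathbf x}\psi$ in $\mathbb T_i$ is equivalent to the existence of a monomorphism $\lbrace\mathbf x.\phi\rbrace\to\lbrace\mathbf x.\psi\rbrace$ in $\mathscr C_{\mathbb T_i}$. So it suffices to show that $T$ preserves and reflects monomorphisms. The crux is to translate monomorphisms in $\mathscr C_{\mathbb T_i}$ across the dualizing equivalence of Corollary 4.3 into epimorphisms in $\mathbf{Cart}(\mathscr C_{\mathbb T_i},\mathbf{Set})_{fp}$: a monomorphism $m$ in $\mathscr C_{\mathbb T_i}$ corresponds, under the contravariant Yoneda embedding, to an epimorphism $\mathbf y(m)$ between the associated representable (finitely-generated projective) models. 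Since $E$ both preserves and reflects epimorphisms between finitely-generated projectives by hypothesis, and since $T$ is, by construction, the functor induced by $E$ through these equivalences, $T$ must preserve and reflect monomorphisms, hence provability.

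The main obstacle I anticipate is carefully establishing that the contravariant Yoneda correspondence of Corollary 4.3 sends monomorphisms in $\mathscr C_{\mathbb T_i}$ to epimorphisms in the subcategory of finitely-generated projectives and conversely, so that the epi-preservation/reflection hypothesis on $E$ transfers to mono-preservation/reflection for $T$. Since $\mathbf y\colon\mathscr C^{op}\to[\mathscr C,\mathbf{Set}]$ is fully faithful, it reflects and preserves monos and epis between the objects in its image; the subtlety is that the relevant epimorphisms must be epimorphisms \emph{in the full subcategory of finitely-generated projectives} rather than in all of $\mathbf{Cart}(\mathscr C_{\mathbb T_i},\mathbf{Set})$, matching exactly the class of epimorphisms named in Definition 4.4. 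I would need to confirm that a morphism of representables is a mono in $\mathscr C^{op}$ precisely when the corresponding $\mathbf y(h)$ is epic relative to finitely-generated projectives, using full faithfulness of $\mathbf y$ together with the fact that representables generate.

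A secondary point to handle with care is that the equivalences of Corollary 4.3 are only \emph{weak} equivalences in general, becoming strong equivalences via the Axiom of Choice; I would invoke this choice of quasi-inverse to define $T$ honestly as a functor and note that the preservation and reflection properties are invariant under replacing functors by naturally isomorphic ones. With these identifications in place, the preservation and reflection of provability for $T$ follows formally from the corresponding properties of $E$ on epimorphisms between finitely-generated projectives.
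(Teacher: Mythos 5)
Your proposal is correct and follows essentially the same route as the paper: identify models with cartesian functors via Lemma 2.2, use Corollary 4.3 to restrict $E$ to the finitely-generated projectives and transport it to a functor $\mathscr C_{\mathbb T_1}^{op}\to\mathscr C_{\mathbb T_2}^{op}$, and then invoke Lemma 2.4 to convert preservation and reflection of epimorphisms into preservation and reflection of provability. You are in fact somewhat more explicit than the paper about the two genuine subtleties (that the relevant epimorphisms are those of the full subcategory of finitely-generated projectives, and that the weak equivalence must be strictified to define $T$), which is to your credit but does not change the argument.
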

\begin{proof}  The proof is largely indicated by the following diagram.  On the right $\Phi$ and $\Psi$ name the equivalences as in Lemma 2.2.  And the syntactic categories are identified via the Yoneda embeddings as full subcategories of the respective categories of cartesian functors as in Corollary 4.3.
$$\begin{tikzpicture}
\node(1){$\mathscr C_{\mathbb T_1}^{op}$};
\node(2)[node distance=1.5in, right of=1]{$\mathbf{Cart}(\mathscr C_{\mathbb T_1},\mathbf{Set})$};
\node(3)[node distance=.8in, below of=1]{$\mathscr C_{\mathbb T_2}^{op}$};
\node(4)[node distance=1.5in, right of=3]{$\mathbf{Cart}(\mathscr C_{\mathbb T_2},\mathbf{Set})$};
\node(5)[node distance=1.5in, right of=2]{$\mathbb T_1\text{-}\mathbf{Mod}(\mathbf{Set})$};
\node(6)[node distance=1.5in, right of=4]{$\mathbb T_2\text{-}\mathbf{Mod}(\mathbf{Set}).$};
\draw[->](1) to node [above]{$\mathbf y$}(2);
\draw[->](5) to node [right]{$E$}(6);
\draw[->](3) to node [below]{$\mathbf y$}(4);
\draw[->](2) to node [above]{$\Phi$}(5);
\draw[<-](4) to node [below]{$\Psi$}(6);
\end{tikzpicture}$$
Since $E$ preserves finitely-generated projectives, the composite $\Psi\circ E\circ \Phi$ takes representables to representables.  So, there is an induced functor $\mathscr C_{\mathbb T_1}^{op}\to\mathscr C_{\mathbb T_2}^{op}$.  It preserves and reflects epimorphisms by the assumed properties of $E$.  And this is equivalent to giving a functor $T\colon \mathscr C_{\mathbb T_1}\to\mathscr C_{\mathbb T_2}$ preserving and reflecting monomorphisms.  Thus, $T$ preserves and reflects provability in the sense of \S 2.3 by Lemma 2.4..\end{proof}

\begin{cor}  Take the same set-up as that in 4.5.  If $\mathbb T_1$ is undecidable, then $\mathbb T_2$ is also undecidable.
\end{cor}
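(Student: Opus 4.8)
The plan is to argue by contraposition, reducing the decision problem for $\mathbb T_1$ to that for $\mathbb T_2$ through the functor $T$ supplied by Theorem 4.5. First I would invoke Theorem 4.5 to obtain from the representation embedding $E$ a functor of syntactic categories $T\colon\mathscr C_{\mathbb T_1}\to\mathscr C_{\mathbb T_2}$ that preserves and reflects provability; by the second clause of Lemma 2.4 this means exactly that, for any cartesian sequent $\phi\vdash_{\mathbf x}\psi$, a monic arrow $\lbrace\mathbf x.\phi\rbrace\to\lbrace\mathbf x.\psi\rbrace$ exists in $\mathscr C_{\mathbb T_1}$ if, and only if, its image under $T$ is a monic arrow between the corresponding objects of $\mathscr C_{\mathbb T_2}$.

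Next I would translate this categorical statement back into the language of sequents. Fixing a sequent $\sigma$ of $\mathbb T_1$ of the form $\phi\vdash_{\mathbf x}\psi$, the objects $T\lbrace\mathbf x.\phi\rbrace$ and $T\lbrace\mathbf x.\psi\rbrace$ are again formulas-in-context of $\mathbb T_2$, so there is a sequent $T(\sigma)$ of $\mathbb T_2$ whose provability is, by Lemma 2.4 again, equivalent to the existence of the relevant monomorphism in $\mathscr C_{\mathbb T_2}$. Combining the two applications of Lemma 2.4 with the preservation-and-reflection property of $T$ yields the key equivalence, namely that $\sigma$ is provable in $\mathbb T_1$ if, and only if, $T(\sigma)$ is provable in $\mathbb T_2$. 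With this equivalence in hand the contraposition is immediate: were $\mathbb T_2$ decidable, one could decide provability of an arbitrary $\mathbb T_1$-sequent $\sigma$ by forming $T(\sigma)$ and deciding its provability in $\mathbb T_2$, contradicting the undecidability of $\mathbb T_1$. Hence $\mathbb T_2$ is undecidable.

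The hard part will be ensuring that this reduction is genuinely \emph{effective}, since a reduction of decision problems must be witnessed by a Turing machine. As observed in \S2.3, the functor $T$ need not arise from a function of signatures, so the assignment $\sigma\mapsto T(\sigma)$ is not automatically a recursive operation on formulas; one must argue that $T$ can be presented so that its action on the objects of $\mathscr C_{\mathbb T_1}$ --- equivalently, on formulas-in-context of $\mathbb T_1$ --- is computable, for instance by fixing canonical representatives for the representable functors as in Corollary 4.3 and checking that the induced object-assignment is algorithmic. A related point to verify is that the existence of a monomorphism between the two specified objects $T\lbrace\mathbf x.\phi\rbrace$ and $T\lbrace\mathbf x.\psi\rbrace$ is faithfully recorded by the provability of a single concrete sequent $T(\sigma)$, so that the second appeal to Lemma 2.4 is legitimate. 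Once effectivity of $\sigma\mapsto T(\sigma)$ is secured, the remainder is the routine many-one reduction sketched above.
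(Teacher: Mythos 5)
Your argument follows essentially the same route as the paper's: the paper's entire proof of this corollary is the single sentence that if $\mathbb T_2$ were decidable, the induced functor $T$ would provide an algorithm for $\mathbb T_1$, contradicting undecidability. Your elaboration of this into an explicit many-one reduction --- using both directions of Lemma 2.4 to convert ``$T$ preserves and reflects monomorphisms'' into ``$\sigma$ is provable in $\mathbb T_1$ iff $T(\sigma)$ is provable in $\mathbb T_2$'' --- is faithful to what the paper intends. Where you go beyond the paper is in flagging the effectivity of $\sigma\mapsto T(\sigma)$ as the genuinely delicate point. You are right to do so: the paper itself concedes in \S 2.3 that a functor of syntactic categories ``does not necessarily induce a function of signatures,'' and $T$ here is manufactured from the semantic functor $E$ via Yoneda embeddings and the equivalences of Lemma 2.2, so there is no a priori reason its object-assignment is a recursive operation on formulas-in-context. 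The paper's one-line proof silently assumes this; you have correctly isolated the assumption but, like the paper, you do not discharge it. So the status of your proposal is: same strategy, more honest bookkeeping, and one shared unresolved obligation (computability of the reduction) that neither you nor the published argument actually closes. If you wanted to strengthen the corollary as stated, that is the step to work on --- for instance by tracking how $T$ acts on the concrete objects $\lbrace\mathbf x.\top\rbrace$ and the subobjects cut out by cartesian formulas, and checking that this action admits a syntactic description.
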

\begin{proof}  If $\mathbb T_2$ were decidable, then the induced functor $T$ of syntactic categories would provide an algorithm for $\mathbb T_1$, contradicting undecidability.  \end{proof}

There is now the following result, given as a corollary to the general situation.  This is the complete resolution of the reformulated ``wild implies undecidable'' conjecture.
\begin{cor}  Let $S$ denote a wild $k$-algebra.  The theory $\mathbb T_S$ is undecidable. 
\end{cor}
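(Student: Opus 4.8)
The plan is to recognize the classical representation embedding attached to a wild algebra as a representation embedding in the sense of Definition 4.4, and then to invoke Corollary 4.6 together with the undecidability of $\mathbb T_{k\langle X,Y\rangle}$ supplied by Theorem 3.2. Concretely, wildness of $S$ furnishes a finitely generated $(S,k\langle X,Y\rangle)$-bimodule $M$ that is free over $k\langle X,Y\rangle$. Although the classical definition only refers to the induced functor on finite-dimensional modules, the tensor product $M\otimes_{k\langle X,Y\rangle}-$ is defined on all modules, so I would first promote it to a functor
\[ E = M\otimes_{k\langle X,Y\rangle}-\colon k\langle X,Y\rangle\text{-}\mathbf{Mod}\longrightarrow S\text{-}\mathbf{Mod}, \]
which under the identifications $\mathbb T_{k\langle X,Y\rangle}\text{-}\mathbf{Mod}(\mathbf{Set})\cong k\langle X,Y\rangle\text{-}\mathbf{Mod}$ and $\mathbb T_S\text{-}\mathbf{Mod}(\mathbf{Set})\cong S\text{-}\mathbf{Mod}$ becomes a candidate representation embedding between the two cartesian theories.

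Next I would verify the two conditions of Definition 4.4 for $E$. For preservation of finitely-generated projective models I would use that $M$ is finitely generated and free over $k\langle X,Y\rangle$: a finitely-generated projective $k\langle X,Y\rangle$-module is a retract of some $k\langle X,Y\rangle^{n}$, and since $E$ is additive it carries retracts to retracts and $k\langle X,Y\rangle^{n}$ to $M^{n}$, which remains finitely generated; the freeness of $M$ then keeps the image among the finitely-generated projective models identified by Corollary 4.3. For the preservation and reflection of epimorphisms between finitely-generated projective models, I would invoke faithful flatness of $M$ over $k\langle X,Y\rangle$ (a nonzero free module is faithfully flat): such a functor is exact and conservative on exactness, so it both preserves and reflects surjections, in particular those occurring between the relevant models. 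Having checked these, $E$ qualifies as a representation embedding in the sense of Definition 4.4.

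With $E$ in hand the conclusion is immediate: taking $\mathbb T_1=\mathbb T_{k\langle X,Y\rangle}$ and $\mathbb T_2=\mathbb T_S$, Theorem 3.2 gives that $\mathbb T_1$ is undecidable, and Corollary 4.6 then forces $\mathbb T_S$ to be undecidable as well. I expect the only genuine obstacle to lie in the second paragraph, namely the careful matching of the module-theoretic properties of $M$ (finite generation, freeness, faithful flatness) against the purely categorical conditions of Definition 4.4 as mediated by the characterization of finitely-generated projective models in Corollary 4.3; in particular one must ensure that the epimorphisms preserved and reflected are precisely those between finitely-generated projective models, rather than arbitrary epimorphisms of modules. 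The passage from the classical functor on finite-dimensional modules to the functor on all set-valued models is harmless, since it is the structural properties of the bimodule $M$, and not the preservation of indecomposability, that are actually used.
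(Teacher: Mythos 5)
Your proposal is correct and follows essentially the same route as the paper: identify $M\otimes_{k\langle X,Y\rangle}-$ as a representation embedding in the sense of Definition 4.4 (using that $M$ is finitely generated and free, hence faithfully flat, over $k\langle X,Y\rangle$), then apply Corollary 4.6 together with Theorem 3.2. Your second paragraph merely spells out the verification that the paper disposes of in the remark following Definition 4.4, so there is no substantive difference.
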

\begin{proof} As already observed, $M\otimes_{k\langle X,Y\rangle}-$ is a representation embedding.  Thus, if $\mathbb T_S$ were decidable, then $\mathbb T_{k\langle X,Y\rangle}$ would be as well, a contradiction of Theorem 3.2.\end{proof}

\section{Summary and Discussion}

Theorem 4.5 and its corollaries provide an affirmative resolution of a ``wild implies undecidable'' conjecture reformulated in cartesian logic.  It remains to see, however, whether these results settle the original conjecture.  A brief summary helps to illuminate the central questions.  

In the original formulation, it is the \emph{first-order theory} of the modules over a wild algebra that should be undecidable.  Finite dimensionality is part of the original conjecture, in that the wild algebra is taken to be finite-dimensional.  The idea is that the ``finite-dimensional representation theory'' of a wild algebra should be ``at least as complex as'' that of $k\langle X,Y\rangle$.  It has been proved here that the \emph{cartesian theory} of modules over $k\langle X,Y\rangle$ is undecidable; and that the undecidability is inherited by any module category admitting a different kind of embedding.  So, how much of a departure is the use of cartesian logic?  And does a resolution of a rephrasing of the conjecture in cartesian logic really settle the original?

Cartesian logic is the simplest fragment of first-order categorical logic that both axiomatizes the concrete algebraic data of set-theoretic modules, gives an undecidable theory, and allows use of syntactic categories and a completeness theorem.  Thus, there appears to be no simpler fragment allowing use of the crucial technology, but also no special need of the added complexity of a richer fragment of first-order logic.  In this sense, cartesian logic is exactly what is required.

 The intuitive meanings of the extra connectives of full first-order logic are captured in other features of cartesian logic anyway.  For example, the turnstile `$\vdash$' in the sequent notation acts much like an implication `$\to$'.  And universal quantification is covered in a sense too.  For a statement such as ``the sequent `$\phi\vdash \psi$' is provable'' means ``in any model, the extent to which $\psi$ is true is no less than the extent to which $\phi$ is true.''  As the interpretations are structured sets, the statement is equivalent to $[[\phi]]\subset [[\psi]]$ which has an implicit universal quantification. These extra properties of categorical semantics are precisely what allowed us to recapture the schema of Baur's original undecidability proof \cite{Baur}.  

But the use of cartesian logic has its own rewards.
  The ambient first-order logic of classical module theory is \emph{Boolean}.  The cartesian theories used here cannot even formulate the Boolean axiom schema, since there is neither negation nor implication nor falsity `$\bot$'.  Accordingly, the cartesian theory of $k\langle X,Y\rangle$-modules is non-classical, hence weaker than the classical first-order theory used in the traditional model theory of modules.  Thus, Theorem 3.2 is of independent interest in any event.

In terms of explanation, the extra power of full first-order logic is a red-herring.  For the constructive cartesian logic is enough to make the reason for the undecidability of $\mathbb T_{k\langle X,Y\rangle}$ clear.  That is, the syntactic categories of the cartesian theories interpret an undecidable word problem.  This should not be too surprising given the construction of the theories and the fact that these categories are generalized monoids.  The point is that the machinary of the category theory of cartesian logic gives life to the intuition.

The latter question is trickier and must be left ultimately to the specialists in representation theory.  But a few comments are nonetheless in order.  The obvious objection to the presentation here is simply that cartesian logic is not first-order.  So, strictly speaking, the original conjecture has not been resolved.  In a way, our replies to such a line of objection have been made above.  It should be pointed out, however, that since cartesian logic is a fragment of full first-order logic, that the cartesian theory of $k\langle X,Y\rangle$-modules is undecidable suggests that the first-order theory is too.  However, it is not clear that the same techniques pass undecidability along a representation embedding in the manner of the proof of Theorem 4.5.  The reason is just that the syntactic categories of first-order logic do not behave in the same manner as those for cartesian logic.  More work here needs to be done.

What is more interesting about the results of this paper is that no crucial use of indecomposability or finite-dimensionality has been made at all.  In fact the definition of a representation embedding $E$ includes the precise minimum conditions needed for the proof of Theorem 4.5 to work in the manner it does.  One could better respect the original definitions by asking, for example, that all finitely generated model are preserved.  But while the more general definition given here makes for clean, high-level conceptual proofs of the main results, it does not really explain the importance of indecomposability and finite-dimensionality in the original definitions and conjecture.  So, even if in some sense the results of this paper are taken to settle the ``wild implies undecidable'' conjecture, it is still unknown as to whether there is a compelling and more purely ``representation-theoretic reason'' why it is true.  

Now, indecomposability has been dropped from the main definition because it was not required to prove the theorems.  Finite-dimensionality has been dropped because it cannot be axiomatized in first-order logic and because the main result turns out to apply to algebras and modules of any dimension.  Strictly speaking, the resolution in Corollary 4.6 does not actually require the vector-space structure of representations, but only the action of the ring.  So, the crucial properties that have been used are the that $M$ is a finitely-generated bimodule and in particular free over $k\langle X,Y\rangle$.  Thus, the conditions identifying wild algebras could conceivably be weakened considerably to assert only the existence of a bimodule $M$ that is faithfully flat over $k\langle X,Y\rangle$ and whose tensor $M\otimes_{k\langle X,Y\rangle} -$ preserves finitely-generated projectives.  Wild algebras under this defintion would have undecidable cartesian theories of modules.  These somewhat delicate points, however, must be left to the true specialists in representation theory.

\section{Acknowledgements}

The research of this paper has been supported by NSGS funding through Dalhousie University, as well as by the NSERC Discovery Grant of Dr. Dorette Pronk at Dalhousie.  The author would like to thank Dr. Pronk for supervising this ongoing research and for reading several drafts of the paper; Dr. Peter Selinger at Dalhousie for some illuminating consultation; and Dr. Calin Chindris at the University of Missouri-Columbia for supervising the initial stages of the research and for his ongoing support and encouragement. 

\bibliography{research}

\begin{thebibliography}{ARV10}

\bibitem[ARV10]{AR}
J.~Ad\'amek, J.~Rosick\'y, and E.~M. Vitale.
\newblock {\em Algebraic Theories: A Categorical Introduction to General
  Algebra}.
\newblock Cambridge University Press, London, 2010.

\bibitem[Bau75]{Baur}
W.~Baur.
\newblock Decidability and undecidability of theories of abelian groups with
  predicates for subgroups.
\newblock {\em Compositio Mathematica}, (31):23--30, 1975.

\bibitem[Bel08]{Bell}
J.~L. Bell.
\newblock {\em Toposes and Local Set Theories}.
\newblock Dover, Minneapolis, 2008.

\bibitem[Ben95]{Benson}
D.~J. Benson.
\newblock {\em Representations and cohomology I: Basic representation theory of
  finite groups and associative algebras}.
\newblock Cambridge University Press, Cambridge, 1995.

\bibitem[Bor94]{Handbook1}
F.~Borceux.
\newblock {\em Handbook of Categorical Algebra: Volume 1, Basic Category
  Theory}.
\newblock Cambridge University Press, Cambridge, 1994.

\bibitem[GP]{GP}
L.~Gregory and M.~Prest.
\newblock Representation embeddings, interpretation functors, and controlled
  wild algebras.
\newblock {\em Journal of the London Mathematical Society, (To Appear)}.

\bibitem[Joh01]{elephant}
P.~T. Johnstone.
\newblock {\em Sketches of an Elephant: A topos theory compendium, Vols. I,
  II}.
\newblock Clarendon Press, London, 2001.

\bibitem[Lan02]{Lang}
S.~Lang.
\newblock {\em Algebra 3rd Edition}.
\newblock Graduate Texts in Mathematics 211, Springer-Verlag, Berlin, 2002.

\bibitem[Mac78]{MacLane}
S.~Mac\, Lane.
\newblock {\em Category Theory for the Working Mathematician, 2nd Edition}.
\newblock Graduate Texts in Mathematics 5, Springer-Verlag, Berlin, 1978.

\bibitem[Mat95]{Matiyasevich}
Y.~Matiyasevich.
\newblock Word problem for thue systems with a few relations.
\newblock In H.~Comon, editor, {\em Term Rewriting, LNCS 909}, pages 39--53.
  1995.

\bibitem[MR77]{MR}
M.~Makkai and G.~Reyes.
\newblock {\em First Order Categorical Logic: Model-Theoretical Methods in the
  Theory of Topoi and Related Categories}.
\newblock Lecture Notes in Mathematics 611, Springer-Verlag, Berlin, 1977.

\bibitem[Pre88]{Prest}
M.~Prest.
\newblock {\em Model Theory and Modules}.
\newblock London Mathematical Society Lecture Notes Series 130, Cambridge
  University Press, Cambridge, 1988.

\bibitem[SS07]{Simson}
D.~Simson and A.~Scowronski.
\newblock {\em Elements of the Representation Theory of Associative Algebras 3:
  Representation-Infinite Tilted Algebras}.
\newblock London Mathematical Society Student Texts 72, Cambridge University
  Press, Cambridge, 2007.

\end{thebibliography}
\bibliographystyle{alpha}

\end{document}